\newcommand\xc{1}
\newcommand\yc{.4}  % etither .4 or .5
\newcommand\ycc{.3}   % only used in open orbit picture
\newcommand\magn{.7}
\newcommand{\xx}{0.4}   % used in schematic picture
\newcommand{\yy}{0.15}  % used in schematic picture
\DeclareMathOperator{\QIP}{QIP}
\DeclareMathOperator{\rk}{rk} 
\DeclareMathOperator{\Hom}{Hom}
\DeclareMathOperator{\Ext}{Ext}
\DeclareMathOperator{\Rep}{\mathrm{Rep}}
\DeclareMathOperator{\codim}{\mathrm{codim}}
\DeclareMathOperator{\GL}{\mathrm{GL}}
\DeclareMathOperator{\MM}{M}
\newcommand{\R}{\mathbb{R}}
\newcommand{\C}{\mathbb{C}}
\newcommand{\Z}{\mathbb{Z}}
\newcommand{\N}{\mathbb{N}}
\newcommand{\cP}{\mathcal{P}}
\newcommand{\OO}{\mathcal{O}}
\newcommand{\um}{\underline{m}}
\newcommand{\ud}{\underline{d}}
\newcommand{\ue}{\underline{e}}
\newcommand{\ur}{\underline{r}}
\newcommand{\uv}{\underline{v}}
\newcommand{\dv}{\underline{d}} %dimension vector
\newcommand{\rv}{\underline{e}}
\newcommand{\rvector}{rising vector}
\newcommand{\Vrep}[1]{\operatorname{M}(#1)} %module associated with vector
\newcommand{\Orep}[1]{\OO(#1)} %orbit associated with vector
\newcommand{\Dset}[1]{D(#1)} %Dot set of diagram
\newcommand{\Irred}[1]{\MM_{#1}} %indecomposable representation
\newcommand{\QIPF}{F} %modified QIP
\newcommand{\One}{\mathbf{1}}
\newtheorem{theorem}{Theorem}[section]
\newtheorem{proposition}[theorem]{Proposition}
\newtheorem{lemma}[theorem]{Lemma}
\newtheorem{definition}[theorem]{Definition}
\newtheorem{corollary}[theorem]{Corollary}
\newtheorem{example}[theorem]{Example}
\newtheorem{remark}[theorem]{Remark}
\title{The main reasons for matrices multiplying to zero}
\author{Jakub Koncki}
\address{Institute of Mathematics, Polish Academy of Sciences, Poland}
\author{Rich\'ard Rim\'anyi}
\address{Department of Mathematics, UNC Chapel Hill, Chapel Hill, NC, USA}
\begin{document}

\begin{abstract}
   We provide an explicit description of the maximal-dimensional components of the variety parametrizing sequences of matrices of prescribed sizes whose product is zero.
\end{abstract}

\maketitle

\section{Introduction} 
For $2 \times 2$ matrices $A$ and $B$ there can be three reasons for $AB=0$. Either $A=0$ or $B=0$ or both have rank at most 1 and the kernel of $A$ contains the image of $B$. These are the three components of the subvariety $\{(A,B) \in \C^{2\times 2} \times \C^{2\times 2}:AB=0\}$. The codimensions of these components are 4, 4, and 3, respectively. Since 3 is the smallest codimension, we call the last component the `main reason' for $AB=0$. 

More generally, consider a sequence of matrices $A_1, A_2, \ldots, A_n$, with $A_k \in \C^{d_k \times d_{k-1}}$ for a sequence of dimensions $\ud = (d_0, d_1, \dots, d_n)$, and the variety $\Sigma_{\ud}=\{A_nA_{n-1}\ldots A_1=0\}$. Describing all components of $\Sigma_{\ud}$---even just their number and dimensions---is an ambitious problem. Instead, we focus on the maximal-dimensional components, whose codimension we denote by $C$ and whose number we denote by $\theta$. 

Recently, \cite{LR} provided explicit formulas for $C$ and $\theta$ for any dimension vector $\ud$, motivated by applications to the {\em learning coefficient of deep linear networks} in the context of Singular Learning Theory \cite{watanabe2009, watanabe2024recent}. These formulas will be recalled in Section~\ref{sec:LR results}. That work also described the maximal-dimensional components explicitly in a special case: when $\ud$ is weakly increasing or decreasing.  

In this paper, we provide an explicit description of the maximal-dimensional components of $\Sigma_{\ud}$ for arbitrary $\ud$, thereby identifying the {\em main reasons} for matrices multiplying to zero. Our approach is  as follows:  (1) We formulate a quadratic integer program. (2) We describe an algorithm that, for each of the $\theta$ optimal solutions, constructs a {\em lace diagram}, a picture of dots and lines. These $\theta$ diagrams encode the $\theta$ maximal-dimensional components of $\Sigma_{\ud}$ in two ways: the diagrams represents general elements of the components, and they also encode their defining equations.   

For example, for $\ud=(2,3,2,3)$ ($C=4$, $\theta=3$) 
the integer program is: 
\[
\min(e_1e_2+e_1e_4+e_2e_4+e_2+e_4), \text{  subject to } e_i\in \N, e_1+e_2+e_4=2.
\]
To the three optimal solutions $(e_1,e_2,e_4)=(1,0,1), (1,1,0), (2,0,0)$ we associate the lace diagrams  
\[
\begin{tikzpicture}[baseline=.5,scale=\magn]
  \node (a1) at (0*\xc,0*\yc){$\bullet$}; 
  \node (a2) at (0*\xc,1*\yc){$\bullet$}; 

  \node (b1) at (1*\xc,0*\yc){$\bullet$};
  \node (b2) at (1*\xc,1*\yc){$\bullet$}; 
  \node (b3) at (1*\xc,2*\yc){$\bullet$}; 

  \node (c1) at (2*\xc,0*\yc){$\bullet$}; 
  \node (c2) at (2*\xc,1*\yc){$\bullet$}; 

  \node (d1) at (3*\xc,0*\yc){$\bullet$}; 
  \node (d2) at (3*\xc,1*\yc){$\bullet$}; 
  \node (d3) at (3*\xc,2*\yc){$\bullet$}; 

\draw[thick] (a1.center) -- (b1.center); 
\draw[thick] (a2.center) -- (b2.center);
\draw[thick] (b2.center) -- (c1.center); 
\draw[thick] (b3.center) -- (d1.center); 
\end{tikzpicture},
\qquad\qquad
\begin{tikzpicture}[baseline=.5,scale=\magn]
  \node (a1) at (0*\xc,0*\yc){$\bullet$}; 
  \node (a2) at (0*\xc,1*\yc){$\bullet$}; 

  \node (b1) at (1*\xc,0*\yc){$\bullet$};
  \node (b2) at (1*\xc,1*\yc){$\bullet$}; 
  \node (b3) at (1*\xc,2*\yc){$\bullet$}; 

  \node (c1) at (2*\xc,0*\yc){$\bullet$}; 
  \node (c2) at (2*\xc,1*\yc){$\bullet$}; 

  \node (d1) at (3*\xc,0*\yc){$\bullet$}; 
  \node (d2) at (3*\xc,1*\yc){$\bullet$}; 
  \node (d3) at (3*\xc,2*\yc){$\bullet$}; 

\draw[thick] (a2.center) -- (b1.center); 
\draw[thick] (b2.center) -- (c1.center);
\draw[thick] (b3.center) -- (c2.center); 
\draw[thick] (c1.center) -- (d1.center); 
\draw[thick] (c2.center) -- (d2.center);
\end{tikzpicture},
\qquad\qquad
\begin{tikzpicture}[baseline=.5,scale=\magn]
  \node (a1) at (0*\xc,0*\yc){$\bullet$}; 
  \node (a2) at (0*\xc,1*\yc){$\bullet$}; 

  \node (b1) at (1*\xc,0*\yc){$\bullet$};
  \node (b2) at (1*\xc,1*\yc){$\bullet$}; 
  \node (b3) at (1*\xc,2*\yc){$\bullet$}; 

  \node (c1) at (2*\xc,0*\yc){$\bullet$}; 
  \node (c2) at (2*\xc,1*\yc){$\bullet$}; 

  \node (d1) at (3*\xc,0*\yc){$\bullet$}; 
  \node (d2) at (3*\xc,1*\yc){$\bullet$}; 
  \node (d3) at (3*\xc,2*\yc){$\bullet$}; 

\draw[thick] (a1.center) -- (b1.center); 
\draw[thick] (a2.center) -- (b2.center); 
\draw[thick] (b3.center) -- (c1.center); 
\draw[thick] (c1.center) -- (d1.center);
\draw[thick] (c2.center) -- (d2.center); 
\end{tikzpicture}.
\]
The first one encodes the component 
\begin{multline*}
\{ (A_1,A_2,A_3)\in \C^{3\times 2} \times \C^{2\times 3} \times \C^{3\times 2}
\ :\  \\   
\rk(A_1)\leq 2, \rk(A_2)\leq 2, \rk(A_3)\leq 1,
\rk(A_2A_1)\leq 1, \rk(A_3A_2)\leq 1, \rk(A_3A_2A_1)=0\}
\\
=\{ (A_1,A_2,A_3)\in \C^{3\times 2} \times \C^{2\times 3} \times \C^{3\times 2}
\ :\ 
\rk(A_3)\leq 1, \rk(A_2A_1)\leq 1
\}
\end{multline*}
of $\Sigma_{(2,3,2,3)}$. A general element of this component is
\[
A_1=\begin{bmatrix}
  0 & 0 \\ 1 & 0 \\ 0 & 1    
\end{bmatrix},
\quad
A_2=\begin{bmatrix}
  1 & 0 & 0 \\ 0 & 1  & 0   
\end{bmatrix},
\quad
A_3=\begin{bmatrix}
  0 & 0 \\ 0 & 0 \\ 1 & 0   
\end{bmatrix},
\]
in the sense that the component is the closure of an orbit of $(A_1,A_2,A_3)$ for a certain group action. 

Let us remark that a naive approach can yield a similar description of the sought maximal-dimensional components---namely, by minimizing the orbit-codimension formula over a set known as Kostant partitions of~$\ud$. However, this optimization takes place within a complicated polytope in $\R^{\binom{n}{2}}$. Our refinement reduces the problem to an integer program over a simplex in dimension $n$. Furthermore, as noted in \cite{LR}, this smaller integer program reveals the a priori surprising fact that $C$ and $\theta$ remain invariant under permutations of the components of $\ud$.

\bigskip

\noindent{\bf Acknowledgments.} JK was supported by National Science Centre (Poland) grant Sonatina 2023/48/C/ST1/00002. RR was supported by the National Science Foundation under Grant No. 2200867. Special thanks to S.~P.~Lehalleur for helpful discussions on the topic.   

\section{Equioriented type A quiver representations and their orbits}

\subsection{The quiver representation}
Let $n$ be a non-negative integer, and $\ud=(d_0,d_1,\ldots,d_n)\in \N^{n+1}$, the dimension vector. We consider the quiver representation space
\begin{equation}\label{eq:quiver HOM}
\Rep_{\ud}=\bigoplus_{i=1}^n \Hom(\C^{d_{i-1}},\C^{d_{i}}),
\end{equation}
with the natural reparametrization action of $\GL_{\ud}=\bigtimes_{i=0}^n \GL(d_i,\C)$. 

\subsection{Orbits}
Now we recall the relevant special case of Gabriel's theorem \cite{gabriel_original}, \cite[Ch.3]{kirillov} that describes the finitely many orbits of this action, via representatives and combinatorial codes.

For $0\leq k \leq l\leq n$ let $\One_{kl}$ denote the integer vector $(x_0,\ldots,x_n)$ with $x_i=1$ if $k\leq i\leq l$, and $x_i=0$ otherwise.

\begin{definition}
\begin{enumerate}
    \item 
    We call $\um=(m_{kl})_{0\leq k \leq l\leq n}$ a {\em Kostant partition} for the dimension vector $\ud$ if $\sum_{kl} m_{kl}\One_{kl}=\ud$.
    \item We call $\ur=(r_{ij})_{0\leq i \leq j\leq n}$ the {\em rank pattern} of the Kostant partition $\um$ if \[r_{ij}=\sum_{k\leq i\leq j\leq l} m_{kl}.\]  
    \item \cite{abeasis-del_fra:equioriented, BFRpositivity}
    An arrangement of dots and lines in the plane is called a {\em lace diagram}, if the dots are arranged in columns at horizontal positions $0$ to $n$; in column $k$ there are $d_k$ dots in consecutive integer positions; and some segments are drawn between a dot in column $k$ and a dot in column $k+1$ in such a way that a dot can be adjacent to at most one segment to the left and at most one to the right. These lines glue together to maximal $[k,l]$ intervals. If the number of $[k,l]$ intervals is $m_{kl}$ we say that the lace diagram represents the Kostant partition $\um$. 
\end{enumerate}
\end{definition}

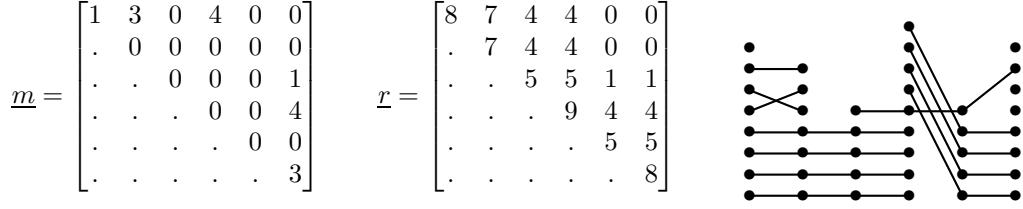
\begin{figure}
\[
\um=
\begin{bmatrix}
1 & 3 & 0 & 4 & 0 & 0 \\
. & 0 & 0 & 0 & 0 & 0 \\
. & . & 0 & 0 & 0 & 1 \\
. & . & . & 0 & 0 & 4 \\
. & . & . & . & 0 & 0 \\
. & . & . & . & . & 3 
\end{bmatrix}
\qquad
\ur=
\begin{bmatrix}
8 & 7 & 4 & 4 & 0 & 0 \\
. & 7 & 4 & 4 & 0 & 0 \\
. & . & 5 & 5 & 1 & 1 \\
. & . & . & 9 & 4 & 4 \\
. & . & . & . & 5 & 5 \\
. & . & . & . & . & 8 
\end{bmatrix}
\qquad
\begin{tikzpicture}[baseline=35,scale=\magn]
  \node (a1) at (0*\xc,0*\yc){$\bullet$}; 
  \node (a2) at (0*\xc,1*\yc){$\bullet$}; 
  \node (a3) at (0*\xc,2*\yc){$\bullet$}; 
  \node (a4) at (0*\xc,3*\yc){$\bullet$}; 
  \node (a5) at (0*\xc,4*\yc){$\bullet$}; 
  \node (a6) at (0*\xc,5*\yc){$\bullet$}; 
  \node (a7) at (0*\xc,6*\yc){$\bullet$}; 
  \node (a8) at (0*\xc,7*\yc){$\bullet$}; 

  \node (b1) at (1*\xc,0*\yc){$\bullet$};
  \node (b2) at (1*\xc,1*\yc){$\bullet$}; 
  \node (b3) at (1*\xc,2*\yc){$\bullet$}; 
  \node (b4) at (1*\xc,3*\yc){$\bullet$}; 
  \node (b5) at (1*\xc,4*\yc){$\bullet$}; 
  \node (b6) at (1*\xc,5*\yc){$\bullet$}; 
  \node (b7) at (1*\xc,6*\yc){$\bullet$};

  \node (c1) at (2*\xc,0*\yc){$\bullet$}; 
  \node (c2) at (2*\xc,1*\yc){$\bullet$}; 
  \node (c3) at (2*\xc,2*\yc){$\bullet$}; 
  \node (c4) at (2*\xc,3*\yc){$\bullet$}; 
  \node (c5) at (2*\xc,4*\yc){$\bullet$}; 

  \node (d1) at (3*\xc,0*\yc){$\bullet$}; 
  \node (d2) at (3*\xc,1*\yc){$\bullet$}; 
  \node (d3) at (3*\xc,2*\yc){$\bullet$}; 
  \node (d4) at (3*\xc,3*\yc){$\bullet$}; 
  \node (d5) at (3*\xc,4*\yc){$\bullet$}; 
  \node (d6) at (3*\xc,5*\yc){$\bullet$}; 
  \node (d7) at (3*\xc,6*\yc){$\bullet$}; 
  \node (d8) at (3*\xc,7*\yc){$\bullet$}; 
  \node (d9) at (3*\xc,8*\yc){$\bullet$}; 

  \node (e1) at (4*\xc,0*\yc){$\bullet$}; 
  \node (e2) at (4*\xc,1*\yc){$\bullet$}; 
  \node (e3) at (4*\xc,2*\yc){$\bullet$}; 
  \node (e4) at (4*\xc,3*\yc){$\bullet$}; 
  \node (e5) at (4*\xc,4*\yc){$\bullet$}; 

  \node (f1) at (5*\xc,0*\yc){$\bullet$}; 
  \node (f2) at (5*\xc,1*\yc){$\bullet$}; 
  \node (f3) at (5*\xc,2*\yc){$\bullet$}; 
  \node (f4) at (5*\xc,3*\yc){$\bullet$}; 
  \node (f5) at (5*\xc,4*\yc){$\bullet$}; 
  \node (f6) at (5*\xc,5*\yc){$\bullet$}; 
  \node (f7) at (5*\xc,6*\yc){$\bullet$}; 
  \node (f8) at (5*\xc,7*\yc){$\bullet$};

\draw[thick] (a1.center) -- (d1.center); 
\draw[thick] (a2.center) -- (d2.center); 
\draw[thick] (a3.center) -- (d3.center); 
\draw[thick] (a4.center) -- (d4.center); 
\draw[thick] (a5.center) -- (b6.center); 
\draw[thick] (a6.center) -- (b5.center); 
\draw[thick] (a7.center) -- (b7.center); 
\draw[thick] (c5.center) -- (e5.center); \draw[thick] (e5.center) -- (f7.center); 
\draw[thick] (d6.center) -- (e1.center); 
\draw[thick] (d7.center) -- (e2.center); 
\draw[thick] (d8.center) -- (e3.center); 
\draw[thick] (d9.center) -- (e4.center); 
\draw[thick] (e1.center) -- (f1.center); 
\draw[thick] (e2.center) -- (f2.center); 
\draw[thick] (e3.center) -- (f3.center); 
\draw[thick] (e4.center) -- (f4.center); 
\end{tikzpicture}
\]
\caption{A Kostant partition $\um$ for the dimension vector $\ud=(8,7,5,9,5,8)$, the induced rank pattern $\ur$ (row and column indices run from 0 to $n$), and a lace diagram representing $\um$. Another lace diagram representing the same $\um$ is the top-left picture in Figure~\ref{fig:875958 raising vectors}.}
\label{fig:m-r-lace}
\end{figure}

Gabriel's theorem, applied to the equioriented type A quiver, implies that the orbits of the representation~\eqref{eq:quiver HOM} are in bijection with the Kostant partitions for $\ud$. In particular, the representation has finitely many orbits for any $\ud$. The one associated with the Kostant partition $\um$ is called $\OO_{\um}$. Defining equations for the closure of $\OO_{\um}$, as well as representatives of $\OO_{\um}$ can be described through the associated $\ur$ and lace diagrams:
\begin{itemize}
\item 
Let $\ur$ be the rank pattern of $\um$. We have
\[
\overline{\OO}_{\um}
=
\{ 
(A_i)\in\Rep_{\ud}
\ :\ 
\rk( A_l  A_{l-1}  \ldots  A_{k+1})\leq r_{kl}
\ \forall k \leq l
\}.
\]
\item Consider a lace diagram, and choose an identification of the dots in the $k$'th column with the standard basis vectors $v_i$ of $\C^{d_k}$. Let $A_k:\C^{d_{k-1}}\to \C^{d_k}$ be the linear map that maps $v_i \in \C^{d_{k-1}}$ to $w\in \C^{d_k}$ if there is a segment connecting $v_i$ with $w$. If there is no segment coming out of $v_i$ to the right, then that basis vector is mapped to 0. The obtained element $(A_k) \in \Rep_{\ud}$, a sequence of 01-matrices, is in $\OO_{\um}$.
\end{itemize}

\subsection{Quiver modules}

In the proof of Gabriel's theorem, as well as in our arguments below, the language of quiver modules is important. A quiver module (for our quiver) is a collection of finite dimensional vector spaces $V_i$ for $i=0,\ldots,n$ and linear maps $\phi_i:V_{i-1}\to V_i$. The sequence of dimensions is called the dimension vector of the module.

The concepts of homomorphism, isomorphism, direct sum $\oplus$, $\Hom$, $\Ext$,  direct sum decomposition, indecomposable modules are defined the usual/obvious way for quiver representations, see \cite{kirillov}. The Krull-Schmidt theorem about the existence and uniqueness of decomposition to indecomposable summands holds.

An algebraic statement essentially equivalent to the geometric statement on the orbit description above is that the indecomposable quiver modules are 
\[
\MM_{kl}= ( 0 \to 0 \to \ldots \to 0 \to 
\C \xrightarrow{=} \C \xrightarrow{=}\ldots \xrightarrow{=} \C 
\to 0 \to 0 \to \ldots \to 0)
\]
where the first and last $\C$ are in positions $k \leq l$. An obvious consequence is that isomorphism classes of quiver modules with dimension vector $\ud$  are in bijection with Kostant partitions $\um$ of $\ud$. A module correcponding to $\um$ is $\MM_{\um}=\oplus_{kl} m_{kl} \MM_{kl}$.

\subsection{Voight lemma}
The bijections $\um \leftrightarrow \OO_{\um} \leftrightarrow \MM_{\um}$ are at the heart of the interplay between quiver combinatorics, geometry, and algebra. A fundamental example for this interplay is 

\begin{lemma} \label{lem:voight} \cite{voight}
A normal slice to $\OO_{\um}$ at a point is isomorphic to $\Ext(\MM_{\um},\MM_{\um})$.
\end{lemma}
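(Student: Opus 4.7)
The plan is to compute the normal slice as a cokernel of the infinitesimal $\GL_{\ud}$-action, and then identify that cokernel with $\Ext(\MM_{\um},\MM_{\um})$ via the standard four-term exact sequence from quiver homological algebra.

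First I would differentiate the $\GL_{\ud}$-action at a point $M=(\phi_i:V_{i-1}\to V_i)_{i=1}^n$ of $\OO_{\um}$. The action $(g_i)\cdot(A_i)=(g_iA_ig_{i-1}^{-1})$ yields the infinitesimal action map
\[
d:\bigoplus_{i=0}^n\Hom(V_i,V_i)\longrightarrow \Rep_{\ud},\qquad (h_i)\longmapsto (h_i\phi_i-\phi_ih_{i-1})_{i=1}^n.
\]
Since $\Rep_{\ud}$ is a linear space and $\OO_{\um}$ is the $\GL_{\ud}$-orbit of $M$, the tangent space $T_M\OO_{\um}$ equals $\mathrm{Im}(d)$. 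Choosing any linear complement to $\mathrm{Im}(d)$ inside $\Rep_{\ud}$ gives a normal slice at $M$, which is therefore isomorphic as a vector space to $\mathrm{coker}(d)$.

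Next I would produce the four-term exact sequence
\[
0\to \Hom(M,M)\to \bigoplus_{i=0}^n\Hom(V_i,V_i)\xrightarrow{\,d\,} \Rep_{\ud} \to \Ext(M,M)\to 0.
\]
The identification $\ker(d)=\Hom(M,M)$ is tautological: $(h_i)$ lies in $\ker(d)$ precisely when $h_i\phi_i=\phi_ih_{i-1}$ for every $i$, i.e., exactly when $(h_i)$ is a quiver-module endomorphism of $M$. The identification $\mathrm{coker}(d)=\Ext(M,M)$ is the standard calculation from the length-one projective resolution of $M$ over the path algebra of an equioriented type $A$ quiver (which also forces $\Ext^{\ge 2}$ to vanish). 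Equivalently, one checks directly that each $(B_i)\in\Rep_{\ud}$ represents the Yoneda extension
\[
0\to M \to E \to M \to 0,\qquad E_i=V_i\oplus V_i,\qquad \phi_i^E=\begin{pmatrix}\phi_i & B_i \\ 0 & \phi_i\end{pmatrix},
\]
and that two cocycles $(B_i),(B'_i)$ define equivalent extensions exactly when they differ by an element of $\mathrm{Im}(d)$.

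Combining the two steps yields the claimed isomorphism of the normal slice with $\Ext(\MM_{\um},\MM_{\um})$. The main obstacle is the cokernel identification; once the standard projective resolution (or, concretely, the Yoneda construction above) is in hand, the rest of the argument is essentially bookkeeping, matching each homological piece of the sequence with the corresponding geometric datum of the orbit.
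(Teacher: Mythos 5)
Your argument is correct: this is precisely the standard proof of Voigt's lemma, which the paper does not prove but simply cites from \cite{voight}. Identifying the tangent space to the orbit with the image of the infinitesimal action $d$, and the cokernel of $d$ with $\Ext(\MM_{\um},\MM_{\um})$ via the four-term sequence $0\to\Hom(M,M)\to\bigoplus_i\Hom(V_i,V_i)\to\Rep_{\ud}\to\Ext(M,M)\to 0$ (or the explicit Yoneda cocycle computation you give), is exactly how the cited result is established, and it suffices for the way the lemma is used in the paper, namely the codimension formula $\codim\OO_{\um}=\dim\Ext(\MM_{\um},\MM_{\um})$.
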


As a consequence we have a formula for the codimension of the orbits: 
\begin{multline*}
\codim(\OO_{\um} \subset \Rep_{\ud})
=
\dim \Ext(\MM_{\um},\MM_{\um})
=
\dim \Ext( \oplus_{ij} m_{ij}\MM_{ij}, \oplus_{uv} m_{uv}\MM_{uv})
\\
=
\sum_{ij}\sum_{uv} m_{ij}m_{uv}\dim \Ext(\MM_{ij},\MM_{uv}),
\end{multline*}
where an easy calculation (cf. \cite[Lemma~4.4]{FRduke}) gives 
\begin{equation}\label{eqn:ExtIndecomposables}
\dim \Ext(\MM_{ij},\MM_{uv})=
\begin{cases}
    1 & \text{if } i+1\leq u \leq j+1 \leq v, \\
    0 & \text{otherwise.}
\end{cases}
\end{equation}

\begin{corollary} \label{cor:Ext}
	For an arbitrary quiver module $\MM$ and integer $k\in[0,n]$ we have
	\[
    \Ext(\MM,\Irred{0k})=0=\Ext(\Irred{kn},\MM).
    \]
\end{corollary}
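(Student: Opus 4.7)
The plan is to reduce the statement to the explicit formula~\eqref{eqn:ExtIndecomposables} for $\Ext$ between indecomposables, via additivity of $\Ext$ on direct sums and the Krull--Schmidt decomposition recalled just above the lemma.

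First I would write $\MM = \bigoplus_{ij} m_{ij}\MM_{ij}$ using Krull--Schmidt, and then expand
\[
\Ext(\MM,\MM_{0k}) = \bigoplus_{ij} m_{ij}\,\Ext(\MM_{ij},\MM_{0k}),
\qquad
\Ext(\MM_{kn},\MM) = \bigoplus_{ij} m_{ij}\,\Ext(\MM_{kn},\MM_{ij}),
\]
exactly as in the codimension calculation following Lemma~\ref{lem:voight}. It then suffices to show that each summand vanishes.

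For this I would simply check that the indexing condition in~\eqref{eqn:ExtIndecomposables} can never hold in either case. For $\Ext(\MM_{ij},\MM_{0k})$, nonvanishing would force $i+1 \leq 0$, which contradicts $i\geq 0$. Symmetrically, for $\Ext(\MM_{kn},\MM_{uv})$, nonvanishing would force $n+1\leq v$, which contradicts $v\leq n$. Either observation immediately kills every summand and yields the claim.

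There is no real obstacle here; the content is entirely in formula~\eqref{eqn:ExtIndecomposables}, and the corollary is the observation that the ``leftmost'' indecomposable $\MM_{0k}$ admits no extension from anything on the right, and the ``rightmost'' indecomposable $\MM_{kn}$ admits no extension to anything on the left. I would state it in roughly this form, emphasizing that it is the boundary case of the $\Ext$ formula that will be invoked repeatedly in the sequel.
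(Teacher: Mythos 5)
Your argument is correct and is exactly the intended one: the paper states the corollary as an immediate consequence of the additivity of $\Ext$ over the Krull--Schmidt decomposition together with formula~\eqref{eqn:ExtIndecomposables}, whose indexing condition fails in both boundary cases precisely as you check ($i+1\leq 0$ and $n+1\leq v$ being impossible). Nothing is missing.
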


The following less known proposition is also a corollary of the codimension formula, although it requires more work. 

\begin{proposition} \label{tw:OpenOrbit} \cite[Cor.~2, Rem.~1.]{BR}
    Consider the lace diagram for $\ud$ obtained by aligning the columns of dots at the bottom, and drawing all possible horizontal segments (see the picture below). The associated orbit is the open orbit of $\Rep_{\ud}$. \qed
\end{proposition}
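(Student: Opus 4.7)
The plan is to apply Voight's lemma (Lemma~\ref{lem:voight}): it suffices to show that the Kostant partition $\um$ read off the bottom-aligned lace diagram satisfies $\Ext(\MM_{\um},\MM_{\um})=0$. Via the decomposition $\MM_{\um}=\bigoplus_{kl} m_{kl}\MM_{kl}$ and the formula \eqref{eqn:ExtIndecomposables}, this reduces to checking that for any two intervals $[i,j]$ and $[u,v]$ with $m_{ij},m_{uv}>0$, the condition $i+1\leq u\leq j+1\leq v$ fails.

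First I would describe $\um$ explicitly. In the bottom-aligned diagram, at horizontal height $h\geq 0$ one draws a segment between columns $k$ and $k+1$ iff $d_k>h$ and $d_{k+1}>h$. Hence the maximal $[k,l]$-intervals at height $h$ are exactly the maximal subintervals $[k,l]\subset[0,n]$ on which $d_\bullet>h$. So $m_{kl}>0$ precisely when there exists a height $h$ with $d_k,\ldots,d_l>h$ and (either $k=0$ or $d_{k-1}\leq h$) and (either $l=n$ or $d_{l+1}\leq h$). A useful immediate consequence is that distinct maximal intervals at the same height are separated by at least one column $c$ with $d_c\leq h$.

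The core is a short case analysis on the relative heights $h_1,h_2$ of $[i,j]$ and $[u,v]$, assuming toward contradiction that $i+1\leq u\leq j+1\leq v$. If $h_1=h_2$, the separation remark above forces $u\geq j+2$, contradicting $u\leq j+1$; and $[i,j]=[u,v]$ violates $i+1\leq i$. If $h_1<h_2$, then $[u,v]$ is contained in a unique maximal interval $[i',j']$ at height $h_1$; the subcase $[i,j]=[i',j']$ gives $v\leq j$ against $j+1\leq v$, while $[i,j]\neq[i',j']$ is ruled out by the column gap at height $h_1$ between these two intervals, which again violates $u\leq j+1$. The case $h_1>h_2$ is symmetric, using that $[i,j]$ sits inside a unique maximal interval at height $h_2$.

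The only point requiring care, and the main potential obstacle, is the ``adjacency'' possibility $u=j+1$: pure laminarity of the interval family does not by itself rule it out, but the strict column gap imposed by the maximality in the definition of the bottom-aligned diagram (the remark at the end of the second paragraph) does. Once that is in place, the case analysis is essentially bookkeeping, and Voight's lemma delivers $\codim(\OO_{\um})=0$, i.e., $\OO_{\um}$ is the open orbit of $\Rep_{\ud}$.
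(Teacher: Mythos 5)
Your argument is correct, but note that the paper does not prove this proposition at all: it is quoted from \cite{BR} (the \qed is attached to the citation), with only the remark that it ``is a corollary of the codimension formula, although it requires more work.'' What you have written is precisely that missing argument, carried out along the route the paper alludes to: Voight's lemma reduces openness to $\Ext(\MM_{\um},\MM_{\um})=0$, which via \eqref{eqn:ExtIndecomposables} becomes the combinatorial claim that no ordered pair of intervals $[i,j]$, $[u,v]$ occurring in the bottom-aligned diagram satisfies $i+1\leq u\leq j+1\leq v$. Your identification of the occurring intervals (maximal runs of columns with $d_\bullet>h$ at each height $h$) and the key observation that two distinct maximal runs at the same height are separated by a column $c$ with $d_c\leq h$ --- which is exactly what kills the adjacency case $u=j+1$ --- are right, and the three-way case analysis on the heights closes all cases, including self-extensions. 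One cosmetic slip: in the subcase $h_1<h_2$ with $[i,j]\neq[i',j']$ and $[i',j']$ lying to the \emph{left} of $[i,j]$, the gap violates $i+1\leq u$ rather than $u\leq j+1$ (and symmetrically in the case $h_1>h_2$); this is pure bookkeeping and does not affect the proof. So your proposal is a valid, self-contained replacement for the external reference, obtained exactly as the paper's codimension formula suggests.
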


\[
\begin{tikzpicture}[scale=\magn]
  \node (a1) at (0*\xc,0*\ycc){$\bullet$}; 
  \node (a2) at (0*\xc,1*\ycc){$\bullet$}; 
  \node (a3) at (0*\xc,2*\ycc){$\bullet$}; 
  \node (a4) at (0*\xc,3*\ycc){$\bullet$}; 
  \node (a5) at (0*\xc,4*\ycc){$\bullet$}; 
  \node (a6) at (0*\xc,5*\ycc){$\bullet$};

  \node (b1) at (1*\xc,0*\ycc){$\bullet$}; 
  \node (b2) at (1*\xc,1*\ycc){$\bullet$}; 
  \node (b3) at (1*\xc,2*\ycc){$\bullet$}; 
  \node (b4) at (1*\xc,3*\ycc){$\bullet$}; 
  \node (b5) at (1*\xc,4*\ycc){$\bullet$}; 

  \node (c1) at (2*\xc,0*\ycc){$\bullet$}; 
  \node (c2) at (2*\xc,1*\ycc){$\bullet$}; 
  \node (c3) at (2*\xc,2*\ycc){$\bullet$}; 
  \node (c4) at (2*\xc,3*\ycc){$\bullet$}; 
  \node (c5) at (2*\xc,4*\ycc){$\bullet$}; 
  \node (c6) at (2*\xc,5*\ycc){$\bullet$}; 
  \node (c7) at (2*\xc,6*\ycc){$\bullet$}; 

  \node (d1) at (3*\xc,0*\ycc){$\bullet$}; 
  \node (d2) at (3*\xc,1*\ycc){$\bullet$}; 

  \node (e1) at (4*\xc,0*\ycc){$\bullet$}; 
  \node (e2) at (4*\xc,1*\ycc){$\bullet$}; 
  \node (e3) at (4*\xc,2*\ycc){$\bullet$}; 
  \node (e4) at (4*\xc,3*\ycc){$\bullet$}; 
  \node (e5) at (4*\xc,4*\ycc){$\bullet$}; 

  \node (f1) at (5*\xc,0*\ycc){$\bullet$}; 
  \node (f2) at (5*\xc,1*\ycc){$\bullet$}; 
  \node (f3) at (5*\xc,2*\ycc){$\bullet$}; 
  \node (f4) at (5*\xc,3*\ycc){$\bullet$}; 
  \node (f5) at (5*\xc,4*\ycc){$\bullet$}; 
  \node (f6) at (5*\xc,5*\ycc){$\bullet$}; 
  \node (f7) at (5*\xc,6*\ycc){$\bullet$}; 
  \node (f8) at (5*\xc,7*\ycc){$\bullet$}; 
  \node (f9) at (5*\xc,8*\ycc){$\bullet$}; 

  \node (g1) at (6*\xc,0*\ycc){$\bullet$}; 
  \node (g2) at (6*\xc,1*\ycc){$\bullet$}; 
  \node (g3) at (6*\xc,2*\ycc){$\bullet$}; 
  \node (g4) at (6*\xc,3*\ycc){$\bullet$}; 
  \node (g5) at (6*\xc,4*\ycc){$\bullet$}; 
  \node (g6) at (6*\xc,5*\ycc){$\bullet$}; 
  \node (g7) at (6*\xc,6*\ycc){$\bullet$}; 

  \node (h1) at (7*\xc,0*\ycc){$\bullet$}; 
  \node (h2) at (7*\xc,1*\ycc){$\bullet$}; 
  \node (h3) at (7*\xc,2*\ycc){$\bullet$}; 
  \node (h4) at (7*\xc,3*\ycc){$\bullet$}; 
  \node (h5) at (7*\xc,4*\ycc){$\bullet$}; 
  \node (h6) at (7*\xc,5*\ycc){$\bullet$}; 
  \node (h7) at (7*\xc,6*\ycc){$\bullet$}; 
  \node (h8) at (7*\xc,7*\ycc){$\bullet$}; 

  \node (i1) at (8*\xc,0*\ycc){$\bullet$}; 
  \node (i2) at (8*\xc,1*\ycc){$\bullet$}; 
  \node (i3) at (8*\xc,2*\ycc){$\bullet$}; 
  \node (i4) at (8*\xc,3*\ycc){$\bullet$}; 
 
  \node (j1) at (9*\xc,0*\ycc){$\bullet$}; 
  \node (j2) at (9*\xc,1*\ycc){$\bullet$}; 
  \node (j3) at (9*\xc,2*\ycc){$\bullet$}; 
  \node (j4) at (9*\xc,3*\ycc){$\bullet$}; 
  \node (j5) at (9*\xc,4*\ycc){$\bullet$}; 
  \node (j6) at (9*\xc,5*\ycc){$\bullet$}; 
  \node (j7) at (9*\xc,6*\ycc){$\bullet$}; 
  \node (j8) at (9*\xc,7*\ycc){$\bullet$}; 

  \node (k1) at (10*\xc,0*\ycc){$\bullet$}; 
  \node (k2) at (10*\xc,1*\ycc){$\bullet$}; 
  \node (k3) at (10*\xc,2*\ycc){$\bullet$}; 
  \node (k4) at (10*\xc,3*\ycc){$\bullet$}; 
  \node (k5) at (10*\xc,4*\ycc){$\bullet$}; 
  \node (k6) at (10*\xc,5*\ycc){$\bullet$}; 
  \node (k7) at (10*\xc,6*\ycc){$\bullet$}; 
  \node (k8) at (10*\xc,7*\ycc){$\bullet$}; 
  \node (k9) at (10*\xc,8*\ycc){$\bullet$}; 

\draw[thick] (a1.center) -- (k1.center);
\draw[thick] (a2.center) -- (k2.center);
\draw[thick] (a3.center) -- (c3.center);
\draw[thick] (e3.center) -- (k3.center);
\draw[thick] (a4.center) -- (c4.center);
\draw[thick] (e4.center) -- (k4.center);
\draw[thick] (a5.center) -- (c5.center);
\draw[thick] (e5.center) -- (h5.center);
\draw[thick] (j5.center) -- (k5.center);
\draw[thick] (f6.center) -- (h6.center);
\draw[thick] (j6.center) -- (k6.center);
\draw[thick] (f7.center) -- (h7.center);
\draw[thick] (j7.center) -- (k7.center);
\draw[thick] (j8.center) -- (k8.center);

\end{tikzpicture}
\]
\begin{corollary} \label{cor:OpenOrbit}
    Let $\MM$ be a quiver module associated to a lace diagram such as in the proposition above. Then
    $\Ext(\MM,\MM)=0$.
\end{corollary}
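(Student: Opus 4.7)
The plan is to deduce the corollary directly from Proposition~\ref{tw:OpenOrbit} and Voight's lemma. By the proposition, a lace diagram of the stated shape---columns aligned at the bottom with every possible horizontal segment drawn---represents a Kostant partition $\um$ whose orbit $\OO_{\um}$ is the open orbit of $\Rep_{\ud}$. In particular, $\codim(\OO_{\um}\subset\Rep_{\ud})=0$. On the other hand, Lemma~\ref{lem:voight} identifies a normal slice to $\OO_{\um}$ with $\Ext(\MM_{\um},\MM_{\um})$, so that the codimension of the orbit equals $\dim\Ext(\MM,\MM)$, where $\MM=\MM_{\um}$ is the module associated to our lace diagram. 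Combining the two identities forces $\dim\Ext(\MM,\MM)=0$, hence $\Ext(\MM,\MM)=0$.

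There is essentially no obstacle: the statement is an immediate consequence of two results already established above. If one preferred a purely combinatorial verification, one could enumerate the indecomposable summands $\MM_{kl}$ appearing in $\MM$ and check, via the criterion \eqref{eqn:ExtIndecomposables}, that no ordered pair of summands $(\MM_{ij},\MM_{uv})$ satisfies $i+1\leq u\leq j+1\leq v$. Inspecting the geometry of a bottom-aligned diagram---where the horizontal laces at a given height form a left-to-right sequence of disjoint intervals, and intervals at greater heights are nested in the support of shorter ones below---shows that this forbidden configuration never occurs. However, the conceptual route via the open orbit is both shorter and more transparent, so I would present only that argument in the final write-up.
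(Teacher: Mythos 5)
Your argument is correct and is exactly the intended derivation: the paper states this as an immediate consequence of Proposition~\ref{tw:OpenOrbit} (open orbit, hence codimension $0$) combined with Lemma~\ref{lem:voight}, which identifies that codimension with $\dim\Ext(\MM,\MM)$. The alternative combinatorial check via \eqref{eqn:ExtIndecomposables} that you sketch is also viable, but your main route coincides with the paper's.
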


\section{$\Sigma_{\ud}$ and its components}

\subsection{The protagonist}

\begin{definition}
    For a dimension vector $\ud$ define the subvariety consisting of tuples of matrices multiplying to 0:
    \[
    \Sigma_{\ud}=\{ (A_i)\in \Rep_{\ud}\ :\ A_n A_{n-1} \ldots A_1=0\}
    \subset 
    \Rep_{\ud}.
    \]
\end{definition}
Clearly $\Sigma_{\ud}$ is an invariant subset, hence it is a union of orbits. In general, as a variety, it has many components. Here are some examples
\begin{itemize}
    \item $\Sigma_{(2,2,2)}$ is the collection of pairs of $2\times 2$ matrices $(A,B)$ for which $BA=0$. This variety has three components: (i) $\{A=0\}$ (of codimension 4), (ii) $\{B=0\}$ (of codimension 4), (iii) $\{\det(A)=\det(B)=0,BA=0\}$ (of codimension~3).
    \item Let $\ud=(5,5,6,6,6,6)$. The variety $\Sigma_{\ud}$ has five codimension 19 components and many smaller dimensional components. One of the top dimensional components is the closure of the orbit $\OO_{\um}$ with 
    \[
    \um=
\begin{bmatrix}
1 & 1 & 1 & 1 & 1 & 0 \\
. & 0 & 0 & 0 & 0 & 1 \\
. & . & 0 & 0 & 0 & 2 \\
. & . & . & 0 & 0 & 1 \\
. & . & . & . & 0 & 1 \\
. & . & . & . & . & 1 
\end{bmatrix}.
    \]
    \item For $\ud=(8,7,5,9,5,8)$ the variety $\Sigma_{\ud}$ has four maximal-dimensional components, one of them is the closure of the orbit described in Figure~\ref{fig:m-r-lace}. All four are also described in Figure~\ref{fig:875958 raising vectors}.
\end{itemize}

\begin{definition}
    Let $C=C_{\ud}$ denote the codimension of the maximal-dimensional components of $\Sigma_{\ud}$. Let $\theta=\theta_{\ud}$ denote the number of $C$ codimensional components.
\end{definition}

The maximal-dimensional components of $\Sigma_{\ud}$ are what we alluded to in the Introduction as the {\em main reasons} for a $\ud$-sequence of matrices multiplying to 0. Hence, $\theta$ answers the question: how many main reasons are there? We will recall the known answer in the next section. Then, in Section~\ref{sec:results} we will solve the problem of describing the $\theta$ maximal-dimensional components of $\Sigma_{\ud}$.

\subsection{Known results on $C$, $\theta$} \label{sec:LR results}
Theorems \ref{thm:LR1}, \ref{thm:LR2}, and \ref{thm:LR3} summarize the results about $C$ and $\theta$ from \cite{LR}.

\begin{theorem} \label{thm:LR1}
We have
\[
    \sum_{s=0}^{\min \ud} (-1)^s q^{\binom{s}{2}} \cP_s\cP_{\ud-s}= \theta q^ C + \text{higher order terms},
\]
where $\cP_s=1/((1-q)(1-q^2)\ldots (1-q^s))$ is the inverse Pochhammer symbol, $\cP_{\uv}=\prod \cP_{v_i}$, and $\ud-s$ means the vector obtained by subtracting $s$ from all components of $\ud$. \qed
\end{theorem}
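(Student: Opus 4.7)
The plan is to match the left-hand side, expanded as a power series in $q$, with the generating function
\[
G_{\ud}(q) := \sum_{\um\,:\, m_{0,n}=0} q^{\codim(\OO_{\um})}
\]
modulo higher-order terms in $q$. Since $r_{0,n}=m_{0,n}$ by the definition of the rank pattern, we have $\Sigma_{\ud}=\bigsqcup_{\um\,:\, m_{0,n}=0} \OO_{\um}$, and the maximal-dimensional components of $\Sigma_{\ud}$ are precisely the closures $\overline{\OO}_{\um}$ with $m_{0,n}=0$ that minimize $\codim(\OO_{\um})=\dim\Ext(\MM_{\um},\MM_{\um})$ by Lemma~\ref{lem:voight}. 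The lowest-degree coefficient of $G_{\ud}$ is therefore $\theta q^C$, so the theorem reduces to showing that the left-hand side agrees with $G_{\ud}(q)$ through order $q^C$.

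To establish this identity, I would count $|\Sigma_{\ud}(\mathbb{F}_q)|$ motivically. Each orbit contributes $|\GL_{\ud}(\mathbb{F}_q)|/|\operatorname{Aut}(\MM_{\um})(\mathbb{F}_q)|$ points, and the automorphism group's order unfolds into a product of $q$-Pochhammer symbols via a standard Hall-algebra computation. After dividing by the global factor $|\GL_{\ud}(\mathbb{F}_q)|$ and repackaging using $\cP_d=1/(q;q)_d$, the sum over $\um$ with $m_{0,n}=0$ can be recovered from the sum over all $\um$ via $q$-Möbius inversion: the identity $\sum_s (-1)^s q^{\binom{s}{2}} \binom{r}{s}_q = \delta_{r,0}$ inverts the stratification of $\Rep_{\ud}$ by $s=\rk(A_n\cdots A_1)$, yielding the claimed alternating sum.

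Corollary~\ref{cor:Ext} is the essential ingredient that decouples the $s$-parameter from the $\um$-combinatorics: since $\Ext(\MM,\Irred{0,n})=0=\Ext(\Irred{0,n},\MM)$, including $s$ summands of $\Irred{0,n}$ in a decomposition changes neither the codimension nor the $\Ext$ pairing with the remaining summands. This produces the factorization $\cP_s\cdot\cP_{\ud-s}$ in the summand: the factor $\cP_s$ encodes the $\Irred{0,n}$-summands that are excluded when we condition on $m_{0,n}=0$, while $\cP_{\ud-s}$ encodes the Kostant combinatorics of the shortened dimension vector $\ud-s$.

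The main obstacle will be identifying $\cP_{\ud}$ as the canonical normalized generating function for quiver modules of dimension vector $\ud$ (which follows from Krull--Schmidt together with the bilinearity of $\Ext$ recorded in \eqref{eqn:ExtIndecomposables}) and threading the $q$-inversion carefully enough to produce the precise coefficients. Once this alignment is done, the leading term of the sum is by construction $\theta q^C$.
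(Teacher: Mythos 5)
First, a point of comparison: this paper does not prove Theorem~\ref{thm:LR1} at all --- it is recalled from \cite{LR} and used as a black box (through Theorem~\ref{thm:LR2}) in the proof of Theorem~\ref{tw:main}. So your proposal can only be judged on its own terms. Your overall strategy --- reduce to the statement that the alternating sum is, up to order $q^C$, the generating function of orbit codimensions over Kostant partitions with $m_{0,n}=0$, and produce that by counting $\Rep_{\ud}(\mathbb{F}_q)$ orbit by orbit and imposing $m_{0,n}=0$ by a $q$-inversion, using Corollary~\ref{cor:Ext} to split off the $\Irred{0n}$-isotypic part --- is a sound and essentially standard route.

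There is, however, a concrete gap in the middle step. The exact identity you set out to ``establish'', namely that the left-hand side equals $G_{\ud}(q)=\sum_{m_{0,n}=0}q^{\codim\OO_{\um}}$, is false: already for $\ud=(1,1)$ the left-hand side is $\tfrac{1}{(1-q)^2}-\tfrac{1}{1-q}=\tfrac{q}{(1-q)^2}=q+2q^2+\cdots$, while $G_{(1,1)}(q)=q$. What the point count actually yields is the automorphism-weighted (stacky) sum: from $|\OO_{\um}(\mathbb{F}_q)|=|\GL_{\ud}(\mathbb{F}_q)|/|\operatorname{Aut}(\MM_{\um})(\mathbb{F}_q)|$, $|\operatorname{Aut}(\MM_{\um})(\mathbb{F}_q)|=q^{\dim\operatorname{End}(\MM_{\um})}\prod_{kl}\prod_{i=1}^{m_{kl}}(1-q^{-i})$ and $\dim\operatorname{End}=\dim\Ext+\langle\ud,\ud\rangle$, one gets the exact identity $\cP_{\ud}=\sum_{\text{all }\um}q^{\codim\OO_{\um}}\prod_{kl}\prod_{i=1}^{m_{kl}}(1-q^i)^{-1}$; then Krull--Schmidt plus Corollary~\ref{cor:Ext} (your decoupling observation, which is correct) and the inversion $\sum_{s+r=m}(-1)^sq^{\binom{s}{2}}\cP_s\cP_r=\delta_{m,0}$ give that the theorem's left-hand side equals $\sum_{m_{0,n}=0}q^{\codim\OO_{\um}}\prod_{kl}\prod_{i=1}^{m_{kl}}(1-q^i)^{-1}$, not $G_{\ud}$. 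The conclusion then needs one more step absent from your sketch (easy, but essential): each weight $\prod_{kl}\prod_{i}(1-q^i)^{-1}$ equals $1$ plus higher-order terms with nonnegative coefficients, so the lowest-order term of this weighted sum is still $\theta q^C$. Note also that you cannot dodge this by working ``through order $q^C$'' from the outset: because of the alternating signs in $s$, truncating before the inversion is not justified; you need the exact weighted identity first and only then read off the bottom coefficient. With the target generating function corrected in this way, your outline goes through.
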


For the dimension vector $\ud=(d_0,d_1,\ldots,d_n)$ let $\ud'=(d'_0\leq d'_1\leq \ldots \leq d'_n)$ be the same multiset, but arranged (weakly) increasingly. Consider $n$-tuples of non-negative integers $\ue=(e_1,e_2,\ldots,e_n)$ and the quadratic integer program $\QIP=\QIP(\ud)$ for them:
\begin{align}
\tag{QIP} \label{eq:QIP}
& \begin{array}{rl} \min & \displaystyle G_{\ud}(\ue)=\sum_{1\leq j\leq i\leq n} e_i(e_j+d'_j-d'_{j-1}) \\[.1in] \text{s.t.} & \displaystyle \ e_i\in \N, \quad \sum_{i=1}^n e_i=d'_0.
\end{array}
\end{align}

\begin{theorem} \label{thm:LR2}
    The minimal value of~\eqref{eq:QIP} is $C$, and the number of times~\eqref{eq:QIP} takes its minimal value is $\theta$. \qed
\end{theorem}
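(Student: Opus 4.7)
The plan is to translate the geometric minimization into a combinatorial problem on Kostant partitions and solve it via a local swap analysis.

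From the rank-pattern formula $r_{0,n}=m_{0,n}$, we have $\OO_{\um}\subseteq\Sigma_{\ud}$ precisely when $m_{0,n}=0$. The maximal-dimensional components of $\Sigma_{\ud}$ are the closures of those orbits of smallest codimension, and Voight's lemma (Lemma~\ref{lem:voight}) rewrites this as
\[
C=\min_{\um:\,m_{0,n}=0}\dim\Ext(\MM_{\um},\MM_{\um}),\qquad\theta=\#\{\um\text{ attaining this minimum}\}.
\]

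Assume first that $\ud=\ud'$ is weakly increasing. I claim every minimizer has only ``edge'' summands of Type~L ($\MM_{0,j}$ with $j<n$) or Type~R ($\MM_{i,n}$ with $i>0$). The key local move is the swap $\MM_{a,b}\oplus\MM_{c,d}\mapsto\MM_{a,d}\oplus\MM_{c,b}$, valid whenever $a<c\le b+1\le d$ (with the convention $\MM_{c,b}=0$ when $c=b+1$). Using~\eqref{eqn:ExtIndecomposables} and the bilinearity of $\Ext$, a direct computation shows the swap strictly decreases $\dim\Ext(\MM_{\um},\MM_{\um})$; it is forbidden only when $(a,d)=(0,n)$, since that would create an $\MM_{0,n}$ summand. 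For any interior summand $\MM_{i,j}$ with $0<i\le j<n$: since $\MM_{i,j}$ ends at column $j$ and $d_{j+1}\ge d_j$, a dimension count at columns $j$ and $j+1$ forces some other summand $\MM_{j+1,s}$ to start at column $j+1$. The swap $\MM_{i,j}\oplus\MM_{j+1,s}\mapsto\MM_{i,s}$ is permissible (since $i>0$) and strictly reduces codimension, contradicting minimality.

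Kostant partitions with only Type~L and Type~R summands are then parameterized by $\ue=(e_1,\ldots,e_n)\in\N^n$ via $e_i=m_{0,i-1}$; the dimension-vector constraints fix $m_{i,n}=e_i+(d_i-d_{i-1})$ and force $\sum_i e_i=d_0$. Corollary~\ref{cor:Ext} together with~\eqref{eqn:ExtIndecomposables} shows that only pairs $(\MM_{0,j},\MM_{i,n})$ with $i\le j+1$ contribute to $\Ext$, and summing gives
\[
\dim\Ext(\MM_{\um},\MM_{\um})=\sum_{1\le j\le i\le n}e_i(e_j+d_j-d_{j-1})=G_{\ud}(\ue),
\]
using $\ud=\ud'$. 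This proves the theorem for weakly increasing $\ud$, and the general case follows from the permutation invariance of $C$ and $\theta$, which is manifest from the symmetric $q$-series expression in Theorem~\ref{thm:LR1}.

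The main obstacle is the structural reduction: one must verify both that the swap is always permissible for an interior summand and that an interior summand always admits a swap partner. The increasing hypothesis is used crucially in the dimension count producing $\MM_{j+1,s}$; the general case is bypassed via permutation invariance.
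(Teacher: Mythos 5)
The paper does not actually prove Theorem~\ref{thm:LR2}: it is recalled from \cite{LR} and stated without proof, so there is no internal argument to compare yours against; what can be said is that your reconstruction is correct and that it overlaps substantially with the machinery this paper does develop. Your chain of reductions is sound: $\OO_{\um}\subset\Sigma_{\ud}$ iff $m_{0n}=0$ (since $r_{0n}=m_{0n}$), maximal-dimensional components correspond to minimal-codimension orbits, and Lemma~\ref{lem:voight} turns the problem into minimizing $\dim\Ext(\MM_{\um},\MM_{\um})$ over Kostant partitions with $m_{0n}=0$. The one step you only assert --- that the uncrossing move $\MM_{a,b}\oplus\MM_{c,d}\mapsto\MM_{a,d}\oplus\MM_{c,b}$ with $a<c\le b+1\le d$ strictly decreases $\dim\Ext(\MM_{\um},\MM_{\um})$ --- is indeed a routine verification with \eqref{eqn:ExtIndecomposables}: the internal contribution drops by exactly one, and for any third summand $\MM_{u,v}$ the potentially bad sign patterns are excluded precisely by $a<c\le b+1\le d$, so the cross terms never increase; no gap there. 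Your dimension count at columns $j,j+1$ producing a partner $\MM_{j+1,s}$ is where weak monotonicity enters, and you are right that it is essential: for unsorted $\ud$ the optimal Kostant partitions can contain interior summands (see Figure~\ref{fig:875958 raising vectors}), so bypassing the general case through the permutation invariance of $C,\theta$ visible in Theorem~\ref{thm:LR1} (which \cite{LR} proves before Theorem~\ref{thm:LR2}, so there is no circularity) is exactly the right move --- and it is legitimate because \eqref{eq:QIP} only depends on the sorted vector $\ud'$. Finally, your evaluation of $\dim\Ext$ on the ``edge-only'' partitions, parametrized by $e_i=m_{0,i-1}$ with $m_{i,n}=e_i+d'_i-d'_{i-1}$, is the same computation this paper carries out in the $k=0$ case of Proposition~\ref{pro:res1} (via Corollaries~\ref{cor:Ext} and~\ref{cor:OpenOrbit}); what the paper's own results do not supply, and what your argument adds, is the structural fact that every minimizer is of this edge-only form for weakly increasing $\ud$ --- the paper instead takes Theorem~\ref{thm:LR2} as input and proves the converse-direction statements (Propositions~\ref{pro:res1} and~\ref{pro:res2}) to identify the components for arbitrary $\ud$.
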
    

\def\oldm{\tilde{n}}

\begin{theorem} \label{thm:LR3}
Define $\oldm=\max \left\{l\ |\ \sum_{i=0}^l d'_i \geq l d'_l\right\} \in \{1,2,\ldots,n\}$ and 
$S=\sum_{i=0}^{\oldm}d'_i$.
    We have
    \begin{align*} 
    C= & \frac{\oldm}{2}\left\{\frac{{S}}{\oldm} \right\}\left(1-\left\{\frac{{S}}{\oldm} \right\}\right) - \frac{\oldm(\oldm-1)}{2}\left(\frac{{S}}{\oldm}\right)^2 + \sum_{0\leq i<j\leq \oldm}d'_id'_j, \\
    \theta=&\binom{\oldm}{{S}-\oldm\left\lfloor \frac{{S}}{\oldm}+\frac{1}{2}\right\rfloor},
    \end{align*} 
where $\lfloor x \rfloor\in \Z$ is the integer part (`floor') of the real number $x$, and $\{x\}=x-\lfloor x \rfloor \in [0,1)$ is its fractional part. \qed
\end{theorem}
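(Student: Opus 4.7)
The plan is to solve the quadratic integer program \eqref{eq:QIP} directly; by Theorem~\ref{thm:LR2}, its minimum is $C$ and its number of optimizers is $\theta$. First I would make the change of variables $f_i := e_i + d'_i$ for $i = 1,\dots,n$. Splitting the objective as $G_{\ud}(\ue) = \sum_{j\le i} e_ie_j + \sum_{j\le i} e_i(d'_j - d'_{j-1})$ and applying $\sum_{j=1}^i(d'_j-d'_{j-1}) = d'_i - d'_0$ together with $2\sum_{j<i}e_ie_j = (\sum e_i)^2 - \sum e_i^2$ and $\sum e_i = d'_0$ gives the clean formula
\[
G_{\ud}(\ue) \;=\; \tfrac{1}{2}\sum_{i=1}^n f_i^2 \;-\; \tfrac{1}{2}\sum_{i=0}^n (d'_i)^2,
\]
while the constraints become $f_i\in\Z$, $f_i\ge d'_i$, and $\sum_{i=1}^n f_i = \sum_{i=0}^n d'_i$. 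So the task reduces to minimizing $\sum f_i^2$ over integer tuples with a prescribed sum and prescribed lower bounds.

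Next I would solve the continuous relaxation by a water-filling argument. Since $(d'_i)$ is weakly increasing, KKT yields $f_i = \max(\lambda,\, d'_i)$ for a common threshold $\lambda$, so there is an index $k$ with $f_i = \lambda$ for $i \le k$ and $f_i = d'_i$ for $i > k$. The sum constraint forces $\lambda = S_k/k$, where $S_k := \sum_{i=0}^k d'_i$, and feasibility $d'_k \le \lambda \le d'_{k+1}$ rewrites as $S_k \ge k d'_k$ and $S_{k+1} < (k+1)d'_{k+1}$. These pin down $k = \oldm$ and $\lambda = S/\oldm$ exactly as in the theorem.

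Then I would pass to integers. From $S/\oldm \ge d'_{\oldm}$ and integrality of the $d'_i$, we get $\lfloor S/\oldm\rfloor \ge d'_{\oldm} \ge d'_i$ for $i \le \oldm$, so the lower bounds never bind; similarly $\lceil S/\oldm\rceil \le d'_{\oldm+1}$. Hence the integer minimum of $\sum_{i=1}^{\oldm} f_i^2$ with fixed sum $S$ is attained precisely when each $f_i$ lies in $\{\lfloor S/\oldm\rfloor,\,\lceil S/\oldm\rceil\}$, with $r := S - \oldm\lfloor S/\oldm\rfloor$ of them taking the upper value. This yields $\binom{\oldm}{r}$ optimizers, which via $\binom{\oldm}{r}=\binom{\oldm}{\oldm-r}$ and the identity $S - \oldm\lfloor S/\oldm + 1/2\rfloor \in \{r,\,r-\oldm\}$ matches the stated formula for $\theta$. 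The formula for $C$ follows by substituting $\sum_{i=1}^{\oldm} f_i^2 = S^2/\oldm + \oldm\{S/\oldm\}(1-\{S/\oldm\})$ into $G = \tfrac12(\sum_{i=1}^n f_i^2 - \sum_{i=0}^n (d'_i)^2)$ and expanding $S^2 = \sum_{i=0}^{\oldm}(d'_i)^2 + 2\sum_{i<j}d'_id'_j$.

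The main obstacle is verifying that no integer optimizer places any $f_j > d'_j$ for $j > \oldm$. A unit transfer from a position $i \le \oldm$ (where $f_i \le \lceil S/\oldm\rceil \le d'_{\oldm+1}$) onto position $\oldm+1$ changes $\sum f_i^2$ by $2(d'_{\oldm+1}+1-f_i) \ge 2 > 0$; longer moves or transfers to positions further to the right only make things worse by convexity of $x \mapsto x^2$. The boundary case $\lceil S/\oldm\rceil = d'_{\oldm+1}$ requires tracking strict versus weak inequalities carefully, but introduces no new ideas.
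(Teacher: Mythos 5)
The paper itself does not prove this theorem: it is quoted from \cite{LR} (note the \qed in the statement), with the remark that in \cite{LR} it is deduced from Theorem~\ref{thm:LR2}; so your route---solving \eqref{eq:QIP} directly---is exactly the intended one, and most of it is right. The substitution $f_i=e_i+d'_i$ does give $G_{\ud}(\ue)=\tfrac12\sum_{i=1}^n f_i^2-\tfrac12\sum_{i=0}^n(d'_i)^2$ with constraints $f_i\in\Z$, $f_i\ge d'_i$, $\sum_{i=1}^n f_i=\sum_{i=0}^n d'_i$; the water-filling step correctly identifies the threshold index as $\tilde{n}$ with level $S/\tilde{n}$ (maximality of $\tilde{n}$ gives $S/\tilde{n}<d'_{\tilde{n}+1}$, hence $\lceil S/\tilde{n}\rceil\le d'_{\tilde{n}+1}$ and $\lfloor S/\tilde{n}\rfloor\ge d'_{\tilde{n}}$); and your evaluation of $C$ from the balanced configuration checks out against the stated formula.

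Two points need repair. First, your exchange argument points the wrong way: computing that a unit transfer \emph{out of the candidate} onto position $\tilde{n}+1$ increases $\sum f_i^2$ only shows the candidates are local minima, whereas for counting $\theta$ you must show that \emph{every} global optimizer has $f_j=d'_j$ for all $j>\tilde{n}$; ``longer moves only make things worse by convexity'' is not a proof over this integer feasible set. The fix is to reverse the argument: if a global optimizer had $f_j\ge d'_j+1$ for some $j>\tilde{n}$, then $\sum_{i\le\tilde{n}}f_i\le S-1$, so some $i\le\tilde{n}$ has $f_i<S/\tilde{n}$, hence $f_i\le\lceil S/\tilde{n}\rceil-1\le d'_{\tilde{n}+1}-1\le f_j-2$, and moving one unit from $j$ to $i$ (feasible, since $f_j-1\ge d'_j$) changes $\sum f_i^2$ by $2(f_i-f_j+1)\le-2<0$, a contradiction; after that, the balancing of the first $\tilde{n}$ coordinates and the count $\binom{\tilde{n}}{r}$ with $r=S-\tilde{n}\lfloor S/\tilde{n}\rfloor$ are standard. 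Second, your final ``matching'' step hides a convention issue: when $\{S/\tilde{n}\}\ge\tfrac12$ the displayed index $S-\tilde{n}\lfloor S/\tilde{n}+\tfrac12\rfloor$ equals $r-\tilde{n}<0$ (e.g.\ $\ud=(2,2,2,2)$: $\tilde{n}=3$, $S=8$, index $-1$, while $\theta=3$), and $\binom{\tilde{n}}{r}=\binom{\tilde{n}}{\tilde{n}-r}$ does not produce the index $r-\tilde{n}$. Your count $\binom{\tilde{n}}{r}$ is the correct value of $\theta$; to reconcile it with the displayed formula you must read the binomial through the absolute value of its lower index (equivalently, point out that the literal formula, with the convention that a negative lower index gives $0$, needs this amendment), and you should say so explicitly.
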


All three expressions for $C$ and $\theta$ are explicit and well programmable. Their proofs in \cite{LR} build on each other, in this order. The first one already displays the a priori surprising fact that both $C$ and $\theta$ are independent of the permutation of the components of~$\ud$.

\subsection{Description of the maximal-dimensional components, in a special case}
Let us assume that the components of $\ud$ are non-decreasing: $\ud=(d_0\leq d_1\leq \ldots \leq d_n)$. Let $\ue=(e_1,e_2,\ldots,e_n)$ be an optimal solution of the integer program~\eqref{eq:QIP} (recall that we have $\theta$ such choices for $\ue$). We are going to associate a lace diagram to $\ud$ and $\ue$ as follows:
\begin{itemize}
    \item Arrange $d_k$ dots in the $k$'th column so that they are aligned at their bottom row.
    \item Draw all possible horizontal segments between these dots.
    \item Delete the segment between the 0th and 1st columns in the bottom $e_1$ rows. Delete the segment between the 1st and 2nd columns in the next $e_2$ rows, etc. (Recall that $\sum e_i=d_0=\min(\ud)$, hence this way we deleted a segment in {\em all} of the rows that spanned from column $0$ to column $n$ after the second step of the algorithm.) 
\end{itemize}

\begin{figure}
\[
\begin{tikzpicture}[scale=\magn]
  \node (a1) at (0*\xc,0*\yc){$\bullet$}; 
  \node (a2) at (0*\xc,1*\yc){$\bullet$}; 
  \node (a3) at (0*\xc,2*\yc){$\bullet$}; 
  \node (a4) at (0*\xc,3*\yc){$\bullet$}; 
  \node (a5) at (0*\xc,4*\yc){$\bullet$};

  \node (b1) at (1*\xc,0*\yc){$\bullet$}; 
  \node (b2) at (1*\xc,1*\yc){$\bullet$}; 
  \node (b3) at (1*\xc,2*\yc){$\bullet$}; 
  \node (b4) at (1*\xc,3*\yc){$\bullet$}; 
  \node (b5) at (1*\xc,4*\yc){$\bullet$};

  \node (c1) at (2*\xc,0*\yc){$\bullet$}; 
  \node (c2) at (2*\xc,1*\yc){$\bullet$}; 
  \node (c3) at (2*\xc,2*\yc){$\bullet$}; 
  \node (c4) at (2*\xc,3*\yc){$\bullet$}; 
  \node (c5) at (2*\xc,4*\yc){$\bullet$}; 
  \node (c6) at (2*\xc,5*\yc){$\bullet$}; 
  \node (c7) at (2*\xc,6*\yc){$\bullet$};

  \node (d1) at (3*\xc,0*\yc){$\bullet$}; 
  \node (d2) at (3*\xc,1*\yc){$\bullet$}; 
  \node (d3) at (3*\xc,2*\yc){$\bullet$}; 
  \node (d4) at (3*\xc,3*\yc){$\bullet$}; 
  \node (d5) at (3*\xc,4*\yc){$\bullet$}; 
  \node (d6) at (3*\xc,5*\yc){$\bullet$}; 
  \node (d7) at (3*\xc,6*\yc){$\bullet$}; 
  \node (d8) at (3*\xc,7*\yc){$\bullet$};

  \node (e1) at (4*\xc,0*\yc){$\bullet$}; 
  \node (e2) at (4*\xc,1*\yc){$\bullet$}; 
  \node (e3) at (4*\xc,2*\yc){$\bullet$}; 
  \node (e4) at (4*\xc,3*\yc){$\bullet$}; 
  \node (e5) at (4*\xc,4*\yc){$\bullet$}; 
  \node (e6) at (4*\xc,5*\yc){$\bullet$}; 
  \node (e7) at (4*\xc,6*\yc){$\bullet$}; 
  \node (e8) at (4*\xc,7*\yc){$\bullet$};

  \node (f1) at (5*\xc,0*\yc){$\bullet$}; 
  \node (f2) at (5*\xc,1*\yc){$\bullet$}; 
  \node (f3) at (5*\xc,2*\yc){$\bullet$}; 
  \node (f4) at (5*\xc,3*\yc){$\bullet$}; 
  \node (f5) at (5*\xc,4*\yc){$\bullet$}; 
  \node (f6) at (5*\xc,5*\yc){$\bullet$}; 
  \node (f7) at (5*\xc,6*\yc){$\bullet$}; 
  \node (f8) at (5*\xc,7*\yc){$\bullet$}; 
  \node (f9) at (5*\xc,8*\yc){$\bullet$}; 

\draw[thick] (b1.center) -- (f1.center);
\draw[thick] (b2.center) -- (f2.center);
\draw[thick] (b3.center) -- (f3.center);
\draw[thick] (b4.center) -- (f4.center);

\draw[thick] (a5.center) -- (b5.center); \draw[thick] (c5.center) -- (f5.center);

\draw[thick] (c6.center) -- (f6.center);
\draw[thick] (c7.center) -- (f7.center);
\draw[thick] (d8.center) -- (f8.center);
\end{tikzpicture}
\qquad\qquad
\begin{tikzpicture}[scale=\magn]
  \node (a1) at (0*\xc,0*\yc){$\bullet$}; 
  \node (a2) at (0*\xc,1*\yc){$\bullet$}; 
  \node (a3) at (0*\xc,2*\yc){$\bullet$}; 
  \node (a4) at (0*\xc,3*\yc){$\bullet$}; 
  \node (a5) at (0*\xc,4*\yc){$\bullet$};

  \node (b1) at (1*\xc,0*\yc){$\bullet$}; 
  \node (b2) at (1*\xc,1*\yc){$\bullet$}; 
  \node (b3) at (1*\xc,2*\yc){$\bullet$}; 
  \node (b4) at (1*\xc,3*\yc){$\bullet$}; 
  \node (b5) at (1*\xc,4*\yc){$\bullet$};

  \node (c1) at (2*\xc,0*\yc){$\bullet$}; 
  \node (c2) at (2*\xc,1*\yc){$\bullet$}; 
  \node (c3) at (2*\xc,2*\yc){$\bullet$}; 
  \node (c4) at (2*\xc,3*\yc){$\bullet$}; 
  \node (c5) at (2*\xc,4*\yc){$\bullet$}; 
  \node (c6) at (2*\xc,5*\yc){$\bullet$}; 
  \node (c7) at (2*\xc,6*\yc){$\bullet$};

  \node (d1) at (3*\xc,0*\yc){$\bullet$}; 
  \node (d2) at (3*\xc,1*\yc){$\bullet$}; 
  \node (d3) at (3*\xc,2*\yc){$\bullet$}; 
  \node (d4) at (3*\xc,3*\yc){$\bullet$}; 
  \node (d5) at (3*\xc,4*\yc){$\bullet$}; 
  \node (d6) at (3*\xc,5*\yc){$\bullet$}; 
  \node (d7) at (3*\xc,6*\yc){$\bullet$}; 
  \node (d8) at (3*\xc,7*\yc){$\bullet$};

  \node (e1) at (4*\xc,0*\yc){$\bullet$}; 
  \node (e2) at (4*\xc,1*\yc){$\bullet$}; 
  \node (e3) at (4*\xc,2*\yc){$\bullet$}; 
  \node (e4) at (4*\xc,3*\yc){$\bullet$}; 
  \node (e5) at (4*\xc,4*\yc){$\bullet$}; 
  \node (e6) at (4*\xc,5*\yc){$\bullet$}; 
  \node (e7) at (4*\xc,6*\yc){$\bullet$}; 
  \node (e8) at (4*\xc,7*\yc){$\bullet$};

  \node (f1) at (5*\xc,0*\yc){$\bullet$}; 
  \node (f2) at (5*\xc,1*\yc){$\bullet$}; 
  \node (f3) at (5*\xc,2*\yc){$\bullet$}; 
  \node (f4) at (5*\xc,3*\yc){$\bullet$}; 
  \node (f5) at (5*\xc,4*\yc){$\bullet$}; 
  \node (f6) at (5*\xc,5*\yc){$\bullet$}; 
  \node (f7) at (5*\xc,6*\yc){$\bullet$}; 
  \node (f8) at (5*\xc,7*\yc){$\bullet$}; 
  \node (f9) at (5*\xc,8*\yc){$\bullet$}; 

\draw[thick] (b1.center) -- (f1.center);
\draw[thick] (b2.center) -- (f2.center);
\draw[thick] (b3.center) -- (f3.center);

\draw[thick] (a4.center) -- (b4.center); \draw[thick] (c4.center) -- (f4.center);
\draw[thick] (a5.center) -- (b5.center); \draw[thick] (c5.center) -- (f5.center);

\draw[thick] (c6.center) -- (f6.center);
\draw[thick] (c7.center) -- (f7.center);
\draw[thick] (d8.center) -- (f8.center);
\end{tikzpicture}
\]
\[
\begin{tikzpicture}[scale=\magn]
  \node (a1) at (0*\xc,0*\yc){$\bullet$}; 
  \node (a2) at (0*\xc,1*\yc){$\bullet$}; 
  \node (a3) at (0*\xc,2*\yc){$\bullet$}; 
  \node (a4) at (0*\xc,3*\yc){$\bullet$}; 
  \node (a5) at (0*\xc,4*\yc){$\bullet$};

  \node (b1) at (1*\xc,0*\yc){$\bullet$}; 
  \node (b2) at (1*\xc,1*\yc){$\bullet$}; 
  \node (b3) at (1*\xc,2*\yc){$\bullet$}; 
  \node (b4) at (1*\xc,3*\yc){$\bullet$}; 
  \node (b5) at (1*\xc,4*\yc){$\bullet$};

  \node (c1) at (2*\xc,0*\yc){$\bullet$}; 
  \node (c2) at (2*\xc,1*\yc){$\bullet$}; 
  \node (c3) at (2*\xc,2*\yc){$\bullet$}; 
  \node (c4) at (2*\xc,3*\yc){$\bullet$}; 
  \node (c5) at (2*\xc,4*\yc){$\bullet$}; 
  \node (c6) at (2*\xc,5*\yc){$\bullet$}; 
  \node (c7) at (2*\xc,6*\yc){$\bullet$};

  \node (d1) at (3*\xc,0*\yc){$\bullet$}; 
  \node (d2) at (3*\xc,1*\yc){$\bullet$}; 
  \node (d3) at (3*\xc,2*\yc){$\bullet$}; 
  \node (d4) at (3*\xc,3*\yc){$\bullet$}; 
  \node (d5) at (3*\xc,4*\yc){$\bullet$}; 
  \node (d6) at (3*\xc,5*\yc){$\bullet$}; 
  \node (d7) at (3*\xc,6*\yc){$\bullet$}; 
  \node (d8) at (3*\xc,7*\yc){$\bullet$};

  \node (e1) at (4*\xc,0*\yc){$\bullet$}; 
  \node (e2) at (4*\xc,1*\yc){$\bullet$}; 
  \node (e3) at (4*\xc,2*\yc){$\bullet$}; 
  \node (e4) at (4*\xc,3*\yc){$\bullet$}; 
  \node (e5) at (4*\xc,4*\yc){$\bullet$}; 
  \node (e6) at (4*\xc,5*\yc){$\bullet$}; 
  \node (e7) at (4*\xc,6*\yc){$\bullet$}; 
  \node (e8) at (4*\xc,7*\yc){$\bullet$};

  \node (f1) at (5*\xc,0*\yc){$\bullet$}; 
  \node (f2) at (5*\xc,1*\yc){$\bullet$}; 
  \node (f3) at (5*\xc,2*\yc){$\bullet$}; 
  \node (f4) at (5*\xc,3*\yc){$\bullet$}; 
  \node (f5) at (5*\xc,4*\yc){$\bullet$}; 
  \node (f6) at (5*\xc,5*\yc){$\bullet$}; 
  \node (f7) at (5*\xc,6*\yc){$\bullet$}; 
  \node (f8) at (5*\xc,7*\yc){$\bullet$}; 
  \node (f9) at (5*\xc,8*\yc){$\bullet$}; 

\draw[thick] (b1.center) -- (f1.center);
\draw[thick] (b2.center) -- (f2.center);
\draw[thick] (b3.center) -- (f3.center);

\draw[thick] (a4.center) -- (b4.center); \draw[thick] (c4.center) -- (f4.center);
\draw[thick] (a5.center) -- (c5.center); \draw[thick] (d5.center) -- (f5.center);

\draw[thick] (c6.center) -- (f6.center);
\draw[thick] (c7.center) -- (f7.center);
\draw[thick] (d8.center) -- (f8.center);
\end{tikzpicture}
\qquad\qquad
\begin{tikzpicture}[scale=\magn]
  \node (a1) at (0*\xc,0*\yc){$\bullet$}; 
  \node (a2) at (0*\xc,1*\yc){$\bullet$}; 
  \node (a3) at (0*\xc,2*\yc){$\bullet$}; 
  \node (a4) at (0*\xc,3*\yc){$\bullet$}; 
  \node (a5) at (0*\xc,4*\yc){$\bullet$};

  \node (b1) at (1*\xc,0*\yc){$\bullet$}; 
  \node (b2) at (1*\xc,1*\yc){$\bullet$}; 
  \node (b3) at (1*\xc,2*\yc){$\bullet$}; 
  \node (b4) at (1*\xc,3*\yc){$\bullet$}; 
  \node (b5) at (1*\xc,4*\yc){$\bullet$};

  \node (c1) at (2*\xc,0*\yc){$\bullet$}; 
  \node (c2) at (2*\xc,1*\yc){$\bullet$}; 
  \node (c3) at (2*\xc,2*\yc){$\bullet$}; 
  \node (c4) at (2*\xc,3*\yc){$\bullet$}; 
  \node (c5) at (2*\xc,4*\yc){$\bullet$}; 
  \node (c6) at (2*\xc,5*\yc){$\bullet$}; 
  \node (c7) at (2*\xc,6*\yc){$\bullet$};

  \node (d1) at (3*\xc,0*\yc){$\bullet$}; 
  \node (d2) at (3*\xc,1*\yc){$\bullet$}; 
  \node (d3) at (3*\xc,2*\yc){$\bullet$}; 
  \node (d4) at (3*\xc,3*\yc){$\bullet$}; 
  \node (d5) at (3*\xc,4*\yc){$\bullet$}; 
  \node (d6) at (3*\xc,5*\yc){$\bullet$}; 
  \node (d7) at (3*\xc,6*\yc){$\bullet$}; 
  \node (d8) at (3*\xc,7*\yc){$\bullet$};

  \node (e1) at (4*\xc,0*\yc){$\bullet$}; 
  \node (e2) at (4*\xc,1*\yc){$\bullet$}; 
  \node (e3) at (4*\xc,2*\yc){$\bullet$}; 
  \node (e4) at (4*\xc,3*\yc){$\bullet$}; 
  \node (e5) at (4*\xc,4*\yc){$\bullet$}; 
  \node (e6) at (4*\xc,5*\yc){$\bullet$}; 
  \node (e7) at (4*\xc,6*\yc){$\bullet$}; 
  \node (e8) at (4*\xc,7*\yc){$\bullet$};

  \node (f1) at (5*\xc,0*\yc){$\bullet$}; 
  \node (f2) at (5*\xc,1*\yc){$\bullet$}; 
  \node (f3) at (5*\xc,2*\yc){$\bullet$}; 
  \node (f4) at (5*\xc,3*\yc){$\bullet$}; 
  \node (f5) at (5*\xc,4*\yc){$\bullet$}; 
  \node (f6) at (5*\xc,5*\yc){$\bullet$}; 
  \node (f7) at (5*\xc,6*\yc){$\bullet$}; 
  \node (f8) at (5*\xc,7*\yc){$\bullet$}; 
  \node (f9) at (5*\xc,8*\yc){$\bullet$}; 

\draw[thick] (b1.center) -- (f1.center);
\draw[thick] (b2.center) -- (f2.center);
\draw[thick] (b3.center) -- (f3.center);

\draw[thick] (a4.center) -- (b4.center); \draw[thick] (c4.center) -- (f4.center);
\draw[thick] (a5.center) -- (d5.center); \draw[thick] (e5.center) -- (f5.center);

\draw[thick] (c6.center) -- (f6.center);
\draw[thick] (c7.center) -- (f7.center);
\draw[thick] (d8.center) -- (f8.center);
\end{tikzpicture}
\]
\caption{The optimal lace diagrams for dimension vector $\ud=(5,5,7,8,8,9)$ in Theorem~\ref{thm:components}. They correspond to the $\theta=4$ solutions of \eqref{eq:QIP}: $(4,1,0,0,0)$ (top left),  $(3,2,0,0,0)$ (top right), $(3,1,1,0,0)$ (bottom left), $(3,1,0,1,0)$ (bottom right). }  \label{fig:557889_original}
\end{figure}
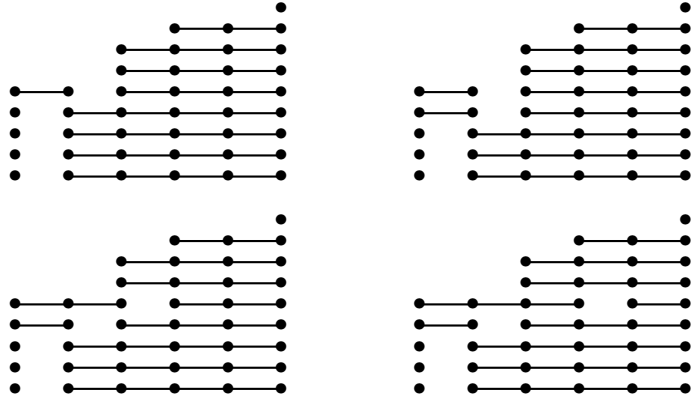

In Figure~\ref{fig:557889_original} we show the outcome for this algorithm for $\ud=(5,5,7,8,8,9)$ and the four ($\theta=4$) optimal solutions of~\eqref{eq:QIP}: $\ue=(4,1,0,0,0)$, $\ue=(3,2,0,0,0)$, $\ue=(3,1,1,0,0)$, $\ue=(3,1,0,1,0)$.

\begin{theorem}\cite{LR}
\label{thm:components}
Consider the lace diagrams obtained by the algorithm above for the optimal solutions $\ue$ of~\eqref{eq:QIP}. They determine $\theta$ Kostant partitions of $\ud$. The orbit closures associated with these Kostant partitions are exactly the $\theta$ maximal-dimensional components of~$\Sigma_{\ud}$. \qed
 \end{theorem}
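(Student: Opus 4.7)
The plan is to identify explicitly the Kostant partition $\um$ that the algorithm produces from an optimal $\ue$, verify $\overline{\OO}_\um \subset \Sigma_\ud$, compute its codimension via Voigt's lemma, and match it to $G_\ud(\ue)$. Since $\ud$ is nondecreasing, Proposition~\ref{tw:OpenOrbit} tells us that before step~3 the lace diagram represents a Kostant partition with $d_0$ intervals of type $[0,n]$ (in rows $1,\ldots,d_0$) and $d_k - d_{k-1}$ intervals of type $[k,n]$ (in rows $d_{k-1}+1,\ldots,d_k$) for $k=1,\ldots,n$. Each of the $d_0$ deletions in step~3 cuts one $[0,n]$ interval between columns $i-1$ and $i$, splitting it into a $[0,i-1]$ piece and an $[i,n]$ piece. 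Consequently
\[
m_{0,i-1} = e_i, \qquad m_{i,n} = e_i + d_i - d_{i-1}, \qquad m_{kl} = 0 \text{ for all other } (k,l).
\]

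Next I would observe that $\overline{\OO}_\um \subset \Sigma_\ud$: because $m_{0,n} = 0$, the lace-diagram representative has no segment from column $0$ to column $n$, hence $A_n\cdots A_1 = 0$ on the orbit, and therefore on its closure.

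The core step is the codimension computation. By Voigt's Lemma~\ref{lem:voight}, $\codim \OO_\um = \dim\Ext(\MM_\um,\MM_\um)$, which expands bilinearly using \eqref{eqn:ExtIndecomposables}. Because $\um$ involves only summands of the two types $\MM_{0,i-1}$ and $\MM_{j,n}$, the inequality $a+1 \leq c \leq b+1 \leq d$ fails for Ext between two $[0,*]$-summands, for Ext between two $[*,n]$-summands, and for Ext from a $[*,n]$-summand to a $[0,*]$-summand; the only surviving cross terms are $\Ext(\MM_{0,i-1},\MM_{j,n})$, of dimension exactly $1$ when $1 \leq j \leq i \leq n$. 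Summing,
\[
\codim \OO_\um = \sum_{1 \leq j \leq i \leq n} e_i\bigl(e_j + d_j - d_{j-1}\bigr) = G_\ud(\ue) = C.
\]

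To finish: $\overline{\OO}_\um$ is an irreducible closed subvariety of $\Sigma_\ud$ of codimension $C$, which is the minimum codimension of a component of $\Sigma_\ud$, so it must coincide with a maximal-dimensional component. The recovery $e_i = m_{0,i-1}$ shows that distinct optimal $\ue$'s produce distinct Kostant partitions, hence distinct components; combined with Theorem~\ref{thm:LR2}'s count of exactly $\theta$ maximal-dimensional components, the construction exhausts them. The only nontrivial step is the Ext bookkeeping, but it is forced by the very restricted shape of $\um$, so no substantial obstacle is expected.
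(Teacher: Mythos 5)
Your proof is correct and follows essentially the same route as the paper's argument for its general Theorem~\ref{tw:main} (which specializes to this statement): identify the explicit Kostant partition produced by the algorithm, use Voigt's Lemma~\ref{lem:voight} together with \eqref{eqn:ExtIndecomposables} to show the orbit codimension equals $G_{\ud}(\ue)$, recover $\ue$ from the multiplicities $m_{0,i-1}$ for injectivity, and conclude by the cardinality count from Theorem~\ref{thm:LR2}. The only cosmetic difference is that you evaluate the Ext terms directly from the two-family shape of the Kostant partition rather than via the vanishing statements (Corollaries~\ref{cor:Ext} and~\ref{cor:OpenOrbit}) used in the paper.
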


\section{Description of the maximal-dimensional components of $\Sigma_{\ud}$}
\label{sec:results}

%\subsection{Representation associated with a vector}
Let us fix a dimension vector $\dv=(d_0,\ldots, d_n)$ and an integer $k\in[0,n]$ such that $d_k=\min(d_0,\ldots, d_n).$

\begin{definition}
	A \rvector \ $\rv$ is a tuple $(e_0,\ldots, e_{k-1}, \star, e_{k+1} \ldots, e_n)$ of nonnegative integers such that
	$\sum_{i\neq k} e_i=d_k$. The $\star$ is used as a placeholder that indicates the value of~$k$.
\end{definition}
To every \rvector \ we  are going to associate a lace diagram $D(\rv)$, and hence a Kostant partition of $\ud$, and in turn a $\GL_{\ud}$-orbit in $\Sigma_{\ud}$.
\begin{definition}
	Let $\rv$ be a \rvector. We associate to it a set of integer points (dots)
	\begin{align*}
	&\ \{(x,y)\in \Z^2| x\in[0,k-1]\,, d_k-d_x-\sum^{k-1}_{i=x} e_i\le y < d_k-\sum^{k-1}_{i=x} e_i\}  \\
	&\cup \{(x,y)\in \Z^2| x=k\,, 0\le y < d_k \}  \\
	&\cup \{(x,y)\in \Z^2| x\in[k+1,n]\,, \sum^{x}_{i=k+1}e_i \le y < d_x +\sum^{x}_{i=k+1}e_i \}.
	\end{align*}
	The lace diagram $\Dset{\rv}$ is obtained by drawing all possible horizontal segments of length 1. We consider the corresponding quiver module $\Vrep{\rv}$, and the $\GL_{\ud}$-orbit $\Orep{\rv}\subset \Rep_{\dv}$.
\end{definition}
\begin{definition}
	For a subset $A\subset \R$ we consider the submodule $\Vrep{\rv}_A\subseteq \Vrep{\rv}$ given by dots whose $y$-coordinate lies in $A$.
\end{definition}
\begin{remark} \rm
	The set of dots in the lace diagram $\Dset{\rv}$ can be described more intuitively as follows. For each $x$ ranging from $0$ to $n$, place $d_x$ consecutive dots in column $x$.
	\begin{itemize}
		\item In column $k$, the dots occupy positions from height $0$ to $d_k - 1$.
		\item For columns to the right of $k$, align the bottoms of adjacent columns so that the bottom of column $i$ is $e_i$ units higher than the bottom of column $i-1$.
		\item For columns to the left of $k$, align the tops of adjacent columns so that the top of column $i$ is $e_i$ units lower than the top of column $i+1$.
	\end{itemize} 
\end{remark}

\begin{figure}
\[
\begin{tikzpicture}[scale=\magn]
  \node[blue] (a1) at (0*\xc,0*\yc){$\bullet$}; 
  \node[blue] (a2) at (0*\xc,1*\yc){$\bullet$}; 
  \node[blue] (a3) at (0*\xc,2*\yc){$\bullet$}; 
  \node[blue] (a4) at (0*\xc,3*\yc){$\bullet$}; 
  \node[blue] (a5) at (0*\xc,4*\yc){$\bullet$};

  \node (b1) at (1*\xc,4*\yc){$\bullet$}; 
  \node (b2) at (1*\xc,5*\yc){$\bullet$}; 
  \node (b3) at (1*\xc,6*\yc){$\bullet$}; 
  \node (b4) at (1*\xc,7*\yc){$\bullet$}; 
  \node (b5) at (1*\xc,8*\yc){$\bullet$};

  \node (c1) at (2*\xc,5*\yc){$\bullet$}; 
  \node (c2) at (2*\xc,6*\yc){$\bullet$}; 
  \node (c3) at (2*\xc,7*\yc){$\bullet$}; 
  \node (c4) at (2*\xc,8*\yc){$\bullet$}; 
  \node (c5) at (2*\xc,9*\yc){$\bullet$}; 
  \node (c6) at (2*\xc,10*\yc){$\bullet$}; 
  \node (c7) at (2*\xc,11*\yc){$\bullet$};

  \node (d1) at (3*\xc,5*\yc){$\bullet$}; 
  \node (d2) at (3*\xc,6*\yc){$\bullet$}; 
  \node (d3) at (3*\xc,7*\yc){$\bullet$}; 
  \node (d4) at (3*\xc,8*\yc){$\bullet$}; 
  \node (d5) at (3*\xc,9*\yc){$\bullet$}; 
  \node (d6) at (3*\xc,10*\yc){$\bullet$}; 
  \node (d7) at (3*\xc,11*\yc){$\bullet$}; 
  \node (d8) at (3*\xc,12*\yc){$\bullet$};

  \node (e1) at (4*\xc,5*\yc){$\bullet$}; 
  \node (e2) at (4*\xc,6*\yc){$\bullet$}; 
  \node (e3) at (4*\xc,7*\yc){$\bullet$}; 
  \node (e4) at (4*\xc,8*\yc){$\bullet$}; 
  \node (e5) at (4*\xc,9*\yc){$\bullet$}; 
  \node (e6) at (4*\xc,10*\yc){$\bullet$}; 
  \node (e7) at (4*\xc,11*\yc){$\bullet$}; 
  \node (e8) at (4*\xc,12*\yc){$\bullet$};

  \node (f1) at (5*\xc,5*\yc){$\bullet$}; 
  \node (f2) at (5*\xc,6*\yc){$\bullet$}; 
  \node (f3) at (5*\xc,7*\yc){$\bullet$}; 
  \node (f4) at (5*\xc,8*\yc){$\bullet$}; 
  \node (f5) at (5*\xc,9*\yc){$\bullet$}; 
  \node (f6) at (5*\xc,10*\yc){$\bullet$}; 
  \node (f7) at (5*\xc,11*\yc){$\bullet$}; 
  \node (f8) at (5*\xc,12*\yc){$\bullet$}; 
  \node (f9) at (5*\xc,13*\yc){$\bullet$};

\draw[thick] (a5.center) -- (b1.center); 
\draw[thick] (b2.center) -- (f1.center); 
\draw[thick] (b3.center) -- (f2.center);
\draw[thick] (b4.center) -- (f3.center);
\draw[thick] (b5.center) -- (f4.center);
\draw[thick] (c5.center) -- (f5.center);
\draw[thick] (c6.center) -- (f6.center);
\draw[thick] (c7.center) -- (f7.center);
\draw[thick] (d8.center) -- (f8.center);

\draw[->,red]   (\xc, 0) --   (\xc,3*\yc) node[midway,right] {$4$};
\draw[->,red] (2*\xc, 3*\yc) -- (2*\xc,4*\yc) node[midway,right] {$1$};
\draw[->,red] (3*\xc, 3*\yc) -- (3*\xc,4*\yc) node[midway,right] {$0$};
\draw[->,red] (4*\xc, 3*\yc) -- (4*\xc,4*\yc) node[midway,right] {$0$};
\draw[->,red] (5*\xc, 3*\yc) -- (5*\xc,4*\yc) node[midway,right] {$0$};

\end{tikzpicture}
\qquad\qquad
\begin{tikzpicture}[scale=\magn]
  \node[blue] (a1) at (0*\xc,0*\yc){$\bullet$}; 
  \node[blue] (a2) at (0*\xc,1*\yc){$\bullet$}; 
  \node[blue] (a3) at (0*\xc,2*\yc){$\bullet$}; 
  \node[blue] (a4) at (0*\xc,3*\yc){$\bullet$}; 
  \node[blue] (a5) at (0*\xc,4*\yc){$\bullet$};

  \node (b1) at (1*\xc,3*\yc){$\bullet$}; 
  \node (b2) at (1*\xc,4*\yc){$\bullet$}; 
  \node (b3) at (1*\xc,5*\yc){$\bullet$}; 
  \node (b4) at (1*\xc,6*\yc){$\bullet$}; 
  \node (b5) at (1*\xc,7*\yc){$\bullet$};

  \node (c1) at (2*\xc,5*\yc){$\bullet$}; 
  \node (c2) at (2*\xc,6*\yc){$\bullet$}; 
  \node (c3) at (2*\xc,7*\yc){$\bullet$}; 
  \node (c4) at (2*\xc,8*\yc){$\bullet$}; 
  \node (c5) at (2*\xc,9*\yc){$\bullet$}; 
  \node (c6) at (2*\xc,10*\yc){$\bullet$}; 
  \node (c7) at (2*\xc,11*\yc){$\bullet$};

  \node (d1) at (3*\xc,5*\yc){$\bullet$}; 
  \node (d2) at (3*\xc,6*\yc){$\bullet$}; 
  \node (d3) at (3*\xc,7*\yc){$\bullet$}; 
  \node (d4) at (3*\xc,8*\yc){$\bullet$}; 
  \node (d5) at (3*\xc,9*\yc){$\bullet$}; 
  \node (d6) at (3*\xc,10*\yc){$\bullet$}; 
  \node (d7) at (3*\xc,11*\yc){$\bullet$}; 
  \node (d8) at (3*\xc,12*\yc){$\bullet$};

  \node (e1) at (4*\xc,5*\yc){$\bullet$}; 
  \node (e2) at (4*\xc,6*\yc){$\bullet$}; 
  \node (e3) at (4*\xc,7*\yc){$\bullet$}; 
  \node (e4) at (4*\xc,8*\yc){$\bullet$}; 
  \node (e5) at (4*\xc,9*\yc){$\bullet$}; 
  \node (e6) at (4*\xc,10*\yc){$\bullet$}; 
  \node (e7) at (4*\xc,11*\yc){$\bullet$}; 
  \node (e8) at (4*\xc,12*\yc){$\bullet$};

  \node (f1) at (5*\xc,5*\yc){$\bullet$}; 
  \node (f2) at (5*\xc,6*\yc){$\bullet$}; 
  \node (f3) at (5*\xc,7*\yc){$\bullet$}; 
  \node (f4) at (5*\xc,8*\yc){$\bullet$}; 
  \node (f5) at (5*\xc,9*\yc){$\bullet$}; 
  \node (f6) at (5*\xc,10*\yc){$\bullet$}; 
  \node (f7) at (5*\xc,11*\yc){$\bullet$}; 
  \node (f8) at (5*\xc,12*\yc){$\bullet$}; 
  \node (f9) at (5*\xc,13*\yc){$\bullet$};

\draw[thick] (a4.center) -- (b1.center); 
\draw[thick] (a5.center) -- (b2.center); 
\draw[thick] (b3.center) -- (f1.center);
\draw[thick] (b4.center) -- (f2.center);
\draw[thick] (b5.center) -- (f3.center);
\draw[thick] (c4.center) -- (f4.center);
\draw[thick] (c5.center) -- (f5.center);
\draw[thick] (c6.center) -- (f6.center);
\draw[thick] (c7.center) -- (f7.center);
\draw[thick] (d8.center) -- (f8.center);

\draw[->,red]   (\xc, 0) --   (\xc,2*\yc) node[midway,right] {$3$};
\draw[->,red] (2*\xc, 2.5*\yc) -- (2*\xc,4*\yc) node[midway,right] {$2$};
\draw[->,red] (3*\xc, 2.5*\yc) -- (3*\xc,4*\yc) node[midway,right] {$0$};
\draw[->,red] (4*\xc, 2.5*\yc) -- (4*\xc,4*\yc) node[midway,right] {$0$};
\draw[->,red] (5*\xc, 2.5*\yc) -- (5*\xc,4*\yc) node[midway,right] {$0$};

\end{tikzpicture}
\]
\[
\begin{tikzpicture}[scale=\magn]
  \node[blue] (a1) at (0*\xc,0*\yc){$\bullet$}; 
  \node[blue] (a2) at (0*\xc,1*\yc){$\bullet$}; 
  \node[blue] (a3) at (0*\xc,2*\yc){$\bullet$}; 
  \node[blue] (a4) at (0*\xc,3*\yc){$\bullet$}; 
  \node[blue] (a5) at (0*\xc,4*\yc){$\bullet$};

  \node (b1) at (1*\xc,3*\yc){$\bullet$}; 
  \node (b2) at (1*\xc,4*\yc){$\bullet$}; 
  \node (b3) at (1*\xc,5*\yc){$\bullet$}; 
  \node (b4) at (1*\xc,6*\yc){$\bullet$}; 
  \node (b5) at (1*\xc,7*\yc){$\bullet$};

  \node (c1) at (2*\xc,4*\yc){$\bullet$}; 
  \node (c2) at (2*\xc,5*\yc){$\bullet$}; 
  \node (c3) at (2*\xc,6*\yc){$\bullet$}; 
  \node (c4) at (2*\xc,7*\yc){$\bullet$}; 
  \node (c5) at (2*\xc,8*\yc){$\bullet$}; 
  \node (c6) at (2*\xc,9*\yc){$\bullet$}; 
  \node (c7) at (2*\xc,10*\yc){$\bullet$};

  \node (d1) at (3*\xc,5*\yc){$\bullet$}; 
  \node (d2) at (3*\xc,6*\yc){$\bullet$}; 
  \node (d3) at (3*\xc,7*\yc){$\bullet$}; 
  \node (d4) at (3*\xc,8*\yc){$\bullet$}; 
  \node (d5) at (3*\xc,9*\yc){$\bullet$}; 
  \node (d6) at (3*\xc,10*\yc){$\bullet$}; 
  \node (d7) at (3*\xc,11*\yc){$\bullet$}; 
  \node (d8) at (3*\xc,12*\yc){$\bullet$};

  \node (e1) at (4*\xc,5*\yc){$\bullet$}; 
  \node (e2) at (4*\xc,6*\yc){$\bullet$}; 
  \node (e3) at (4*\xc,7*\yc){$\bullet$}; 
  \node (e4) at (4*\xc,8*\yc){$\bullet$}; 
  \node (e5) at (4*\xc,9*\yc){$\bullet$}; 
  \node (e6) at (4*\xc,10*\yc){$\bullet$}; 
  \node (e7) at (4*\xc,11*\yc){$\bullet$}; 
  \node (e8) at (4*\xc,12*\yc){$\bullet$};

  \node (f1) at (5*\xc,5*\yc){$\bullet$}; 
  \node (f2) at (5*\xc,6*\yc){$\bullet$}; 
  \node (f3) at (5*\xc,7*\yc){$\bullet$}; 
  \node (f4) at (5*\xc,8*\yc){$\bullet$}; 
  \node (f5) at (5*\xc,9*\yc){$\bullet$}; 
  \node (f6) at (5*\xc,10*\yc){$\bullet$}; 
  \node (f7) at (5*\xc,11*\yc){$\bullet$}; 
  \node (f8) at (5*\xc,12*\yc){$\bullet$}; 
  \node (f9) at (5*\xc,13*\yc){$\bullet$};

\draw[thick] (a4.center) -- (b1.center); 
\draw[thick] (a5.center) -- (c1.center); 
\draw[thick] (b3.center) -- (f1.center);
\draw[thick] (b4.center) -- (f2.center);
\draw[thick] (b5.center) -- (f3.center);
\draw[thick] (c5.center) -- (f4.center);
\draw[thick] (c6.center) -- (f5.center);
\draw[thick] (c7.center) -- (f6.center);
\draw[thick] (d7.center) -- (f7.center);
\draw[thick] (d8.center) -- (f8.center);

\draw[->,red]   (\xc, 0) --   (\xc,2*\yc) node[midway,right] {$3$};
\draw[->,red] (2*\xc, 2*\yc) -- (2*\xc,3*\yc) node[midway,right] {$1$};
\draw[->,red] (3*\xc, 3*\yc) -- (3*\xc,4*\yc) node[midway,right] {$1$};
\draw[->,red] (4*\xc, 3*\yc) -- (4*\xc,4*\yc) node[midway,right] {$0$};
\draw[->,red] (5*\xc, 3*\yc) -- (5*\xc,4*\yc) node[midway,right] {$0$};

\end{tikzpicture}
\qquad\qquad
\begin{tikzpicture}[scale=\magn]
  \node[blue] (a1) at (0*\xc,0*\yc){$\bullet$}; 
  \node[blue] (a2) at (0*\xc,1*\yc){$\bullet$}; 
  \node[blue] (a3) at (0*\xc,2*\yc){$\bullet$}; 
  \node[blue] (a4) at (0*\xc,3*\yc){$\bullet$}; 
  \node[blue] (a5) at (0*\xc,4*\yc){$\bullet$};

  \node (b1) at (1*\xc,3*\yc){$\bullet$}; 
  \node (b2) at (1*\xc,4*\yc){$\bullet$}; 
  \node (b3) at (1*\xc,5*\yc){$\bullet$}; 
  \node (b4) at (1*\xc,6*\yc){$\bullet$}; 
  \node (b5) at (1*\xc,7*\yc){$\bullet$};

  \node (c1) at (2*\xc,4*\yc){$\bullet$}; 
  \node (c2) at (2*\xc,5*\yc){$\bullet$}; 
  \node (c3) at (2*\xc,6*\yc){$\bullet$}; 
  \node (c4) at (2*\xc,7*\yc){$\bullet$}; 
  \node (c5) at (2*\xc,8*\yc){$\bullet$}; 
  \node (c6) at (2*\xc,9*\yc){$\bullet$}; 
  \node (c7) at (2*\xc,10*\yc){$\bullet$};

  \node (d1) at (3*\xc,4*\yc){$\bullet$}; 
  \node (d2) at (3*\xc,5*\yc){$\bullet$}; 
  \node (d3) at (3*\xc,6*\yc){$\bullet$}; 
  \node (d4) at (3*\xc,7*\yc){$\bullet$}; 
  \node (d5) at (3*\xc,8*\yc){$\bullet$}; 
  \node (d6) at (3*\xc,9*\yc){$\bullet$}; 
  \node (d7) at (3*\xc,10*\yc){$\bullet$}; 
  \node (d8) at (3*\xc,11*\yc){$\bullet$};

  \node (e1) at (4*\xc,5*\yc){$\bullet$}; 
  \node (e2) at (4*\xc,6*\yc){$\bullet$}; 
  \node (e3) at (4*\xc,7*\yc){$\bullet$}; 
  \node (e4) at (4*\xc,8*\yc){$\bullet$}; 
  \node (e5) at (4*\xc,9*\yc){$\bullet$}; 
  \node (e6) at (4*\xc,10*\yc){$\bullet$}; 
  \node (e7) at (4*\xc,11*\yc){$\bullet$}; 
  \node (e8) at (4*\xc,12*\yc){$\bullet$};

  \node (f1) at (5*\xc,5*\yc){$\bullet$}; 
  \node (f2) at (5*\xc,6*\yc){$\bullet$}; 
  \node (f3) at (5*\xc,7*\yc){$\bullet$}; 
  \node (f4) at (5*\xc,8*\yc){$\bullet$}; 
  \node (f5) at (5*\xc,9*\yc){$\bullet$}; 
  \node (f6) at (5*\xc,10*\yc){$\bullet$}; 
  \node (f7) at (5*\xc,11*\yc){$\bullet$}; 
  \node (f8) at (5*\xc,12*\yc){$\bullet$}; 
  \node (f9) at (5*\xc,13*\yc){$\bullet$};

\draw[thick] (a4.center) -- (b1.center); 
\draw[thick] (a5.center) -- (d1.center); 
\draw[thick] (b3.center) -- (f1.center);
\draw[thick] (b4.center) -- (f2.center);
\draw[thick] (b5.center) -- (f3.center);
\draw[thick] (c5.center) -- (f4.center);
\draw[thick] (c6.center) -- (f5.center);
\draw[thick] (c7.center) -- (f6.center);
\draw[thick] (d8.center) -- (f7.center);
\draw[thick] (e8.center) -- (f8.center);

\draw[->,red]   (\xc, 0) --   (\xc,2*\yc) node[midway,right] {$3$};
\draw[->,red] (2*\xc, 2*\yc) -- (2*\xc,3*\yc) node[midway,right] {$1$};
\draw[->,red] (3*\xc, 2*\yc) -- (3*\xc,3*\yc) node[midway,right] {$0$};
%\node[red] at (3.1*\xc, 2.55*\yc) {$0$};
\draw[->,red] (4*\xc, 3*\yc) -- (4*\xc,4*\yc) node[midway,right] {$1$};
\draw[->,red] (5*\xc, 3*\yc) -- (5*\xc,4*\yc) node[midway,right] {$0$};

\end{tikzpicture}
\]
\caption{The lace diagrams associated to the dimension vector $\ud=(5,5,7,8,8,9)$, and raising vectors $(\star,4,1,0,0,0)$ (top left),  $(\star,3,2,0,0,0)$ (top right), $(\star,3,1,1,0,0)$ (bottom left), $(\star,3,1,0,1,0)$ (bottom right). Notice that these lace diagrams encode the {\em same} Kostant partitions as the ones in Figure~\ref{fig:557889_original}. Also, these raising vectors are the solutions of (QIP'), hence these lace diagrams describe the maximal-dimensional components of~$\Sigma_{\ud}$.}  
\label{fig:557889 raising vectors}
\end{figure}
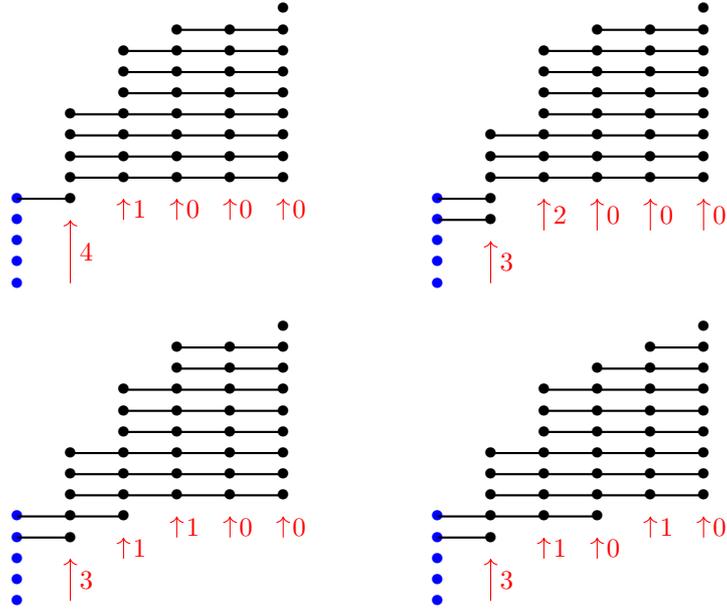

\begin{example} \rm
    Let $\ud=(5,5,7,8,8,9)$. The lace diagrams for $k=0$ and the raising vectors $(\star,4,1,0,0,0)$, $(\star,3,2,0,0,0)$, $(\star,3,1,1,0,0)$, $(\star,3,1,0,1,0)$ are shown in Figure~\ref{fig:557889 raising vectors}.
\end{example}

\begin{figure}
\[
\begin{tikzpicture}[scale=\magn]
  \node (a1) at (0*\xc,0*\yc){$\bullet$}; 
  \node (a2) at (0*\xc,1*\yc){$\bullet$}; 
  \node (a3) at (0*\xc,2*\yc){$\bullet$}; 
  \node (a4) at (0*\xc,3*\yc){$\bullet$}; 
  \node (a5) at (0*\xc,4*\yc){$\bullet$}; 
  \node (a6) at (0*\xc,5*\yc){$\bullet$}; 
  \node (a7) at (0*\xc,6*\yc){$\bullet$}; 
  \node (a8) at (0*\xc,7*\yc){$\bullet$}; 

  \node (b1) at (1*\xc,1*\yc){$\bullet$}; 
  \node (b2) at (1*\xc,2*\yc){$\bullet$}; 
  \node (b3) at (1*\xc,3*\yc){$\bullet$}; 
  \node (b4) at (1*\xc,4*\yc){$\bullet$}; 
  \node (b5) at (1*\xc,5*\yc){$\bullet$}; 
  \node (b6) at (1*\xc,6*\yc){$\bullet$};
  \node (b7) at (1*\xc,7*\yc){$\bullet$};

  \node (c1) at (2*\xc,4*\yc){$\bullet$}; 
  \node (c2) at (2*\xc,5*\yc){$\bullet$}; 
  \node (c3) at (2*\xc,6*\yc){$\bullet$}; 
  \node (c4) at (2*\xc,7*\yc){$\bullet$}; 
  \node (c5) at (2*\xc,8*\yc){$\bullet$}; 

  \node (d1) at (3*\xc,4*\yc){$\bullet$}; 
  \node (d2) at (3*\xc,5*\yc){$\bullet$}; 
  \node (d3) at (3*\xc,6*\yc){$\bullet$}; 
  \node (d4) at (3*\xc,7*\yc){$\bullet$}; 
  \node (d5) at (3*\xc,8*\yc){$\bullet$}; 
  \node (d6) at (3*\xc,9*\yc){$\bullet$}; 
  \node (d7) at (3*\xc,10*\yc){$\bullet$}; 
  \node (d8) at (3*\xc,11*\yc){$\bullet$}; 
  \node (d9) at (3*\xc,12*\yc){$\bullet$}; 

  \node (e1) at (4*\xc,8*\yc){$\bullet$}; 
  \node (e2) at (4*\xc,9*\yc){$\bullet$}; 
  \node (e3) at (4*\xc,10*\yc){$\bullet$}; 
  \node (e4) at (4*\xc,11*\yc){$\bullet$}; 
  \node (e5) at (4*\xc,12*\yc){$\bullet$}; 

  \node (f1) at (5*\xc,8*\yc){$\bullet$}; 
  \node (f2) at (5*\xc,9*\yc){$\bullet$}; 
  \node (f3) at (5*\xc,10*\yc){$\bullet$}; 
  \node (f4) at (5*\xc,11*\yc){$\bullet$}; 
  \node (f5) at (5*\xc,12*\yc){$\bullet$}; 
  \node (f6) at (5*\xc,13*\yc){$\bullet$}; 
  \node (f7) at (5*\xc,14*\yc){$\bullet$}; 
  \node (f8) at (5*\xc,15*\yc){$\bullet$};

\draw[thick] (a2.center) -- (b1.center); 
\draw[thick] (a3.center) -- (b2.center); 
\draw[thick] (a4.center) -- (b3.center); 
\draw[thick] (a5.center) -- (d1.center); 
\draw[thick] (a6.center) -- (d2.center); 
\draw[thick] (a7.center) -- (d3.center); 
\draw[thick] (a8.center) -- (d4.center); 
\draw[thick] (c4.center) -- (d4.center); 
\draw[thick] (c5.center) -- (f1.center); 
\draw[thick] (d6.center) -- (f2.center); 
\draw[thick] (d7.center) -- (f3.center); 
\draw[thick] (d8.center) -- (f4.center); 
\draw[thick] (d9.center) -- (f5.center);

%\node[red] at (0*\xc, 8.6*\yc) {$0$};
\draw[->,red]   (0*\xc, 9*\yc) --  (0*\xc,8*\yc) node[midway,right] {$0$};
\draw[->,red]   (1*\xc, 9*\yc) --  (1*\xc,8*\yc) node[midway,right] {$1$};
%\node[red] at (3*\xc,3*\yc) {$0$};
\draw[->,red]   (3*\xc, 2*\yc) --  (3*\xc,3*\yc) node[midway,right] {$0$};
\draw[->,red]   (4*\xc, 4*\yc) --  (4*\xc,7*\yc) node[midway,right] {$4$};
\draw[->,red]   (5*\xc, 6*\yc) --  (5*\xc,7*\yc) node[midway,right] {$0$};
%\node[red] at (5*\xc,6.7*\yc) {$0$};
  \node[blue] (c1) at (2*\xc,4*\yc){$\bullet$}; 
  \node[blue] (c2) at (2*\xc,5*\yc){$\bullet$}; 
  \node[blue] (c3) at (2*\xc,6*\yc){$\bullet$}; 
  \node[blue] (c4) at (2*\xc,7*\yc){$\bullet$}; 
  \node[blue] (c5) at (2*\xc,8*\yc){$\bullet$}; 
\end{tikzpicture}
\qquad\qquad
\begin{tikzpicture}[scale=\magn]
  \node (a1) at (0*\xc,0*\yc){$\bullet$}; 
  \node (a2) at (0*\xc,1*\yc){$\bullet$}; 
  \node (a3) at (0*\xc,2*\yc){$\bullet$}; 
  \node (a4) at (0*\xc,3*\yc){$\bullet$}; 
  \node (a5) at (0*\xc,4*\yc){$\bullet$}; 
  \node (a6) at (0*\xc,5*\yc){$\bullet$}; 
  \node (a7) at (0*\xc,6*\yc){$\bullet$}; 
  \node (a8) at (0*\xc,7*\yc){$\bullet$}; 

  \node (b1) at (1*\xc,1*\yc){$\bullet$}; 
  \node (b2) at (1*\xc,2*\yc){$\bullet$}; 
  \node (b3) at (1*\xc,3*\yc){$\bullet$}; 
  \node (b4) at (1*\xc,4*\yc){$\bullet$}; 
  \node (b5) at (1*\xc,5*\yc){$\bullet$}; 
  \node (b6) at (1*\xc,6*\yc){$\bullet$};
  \node (b7) at (1*\xc,7*\yc){$\bullet$};

  \node (c1) at (2*\xc,5*\yc){$\bullet$}; 
  \node (c2) at (2*\xc,6*\yc){$\bullet$}; 
  \node (c3) at (2*\xc,7*\yc){$\bullet$}; 
  \node (c4) at (2*\xc,8*\yc){$\bullet$}; 
  \node (c5) at (2*\xc,9*\yc){$\bullet$}; 

  \node (d1) at (3*\xc,5*\yc){$\bullet$}; 
  \node (d2) at (3*\xc,6*\yc){$\bullet$}; 
  \node (d3) at (3*\xc,7*\yc){$\bullet$}; 
  \node (d4) at (3*\xc,8*\yc){$\bullet$}; 
  \node (d5) at (3*\xc,9*\yc){$\bullet$}; 
  \node (d6) at (3*\xc,10*\yc){$\bullet$}; 
  \node (d7) at (3*\xc,11*\yc){$\bullet$}; 
  \node (d8) at (3*\xc,12*\yc){$\bullet$}; 
  \node (d9) at (3*\xc,13*\yc){$\bullet$}; 

  \node (e1) at (4*\xc,8*\yc){$\bullet$}; 
  \node (e2) at (4*\xc,9*\yc){$\bullet$}; 
  \node (e3) at (4*\xc,10*\yc){$\bullet$}; 
  \node (e4) at (4*\xc,11*\yc){$\bullet$}; 
  \node (e5) at (4*\xc,12*\yc){$\bullet$}; 

  \node (f1) at (5*\xc,8*\yc){$\bullet$}; 
  \node (f2) at (5*\xc,9*\yc){$\bullet$}; 
  \node (f3) at (5*\xc,10*\yc){$\bullet$}; 
  \node (f4) at (5*\xc,11*\yc){$\bullet$}; 
  \node (f5) at (5*\xc,12*\yc){$\bullet$}; 
  \node (f6) at (5*\xc,13*\yc){$\bullet$}; 
  \node (f7) at (5*\xc,14*\yc){$\bullet$}; 
  \node (f8) at (5*\xc,15*\yc){$\bullet$};

\draw[thick] (a2.center) -- (b1.center); 
\draw[thick] (a3.center) -- (b2.center); 
\draw[thick] (a4.center) -- (b3.center); 
\draw[thick] (a5.center) -- (b4.center); 
\draw[thick] (a6.center) -- (d1.center); 
\draw[thick] (a7.center) -- (d2.center); 
\draw[thick] (a8.center) -- (d3.center); 
\draw[thick] (c4.center) -- (f1.center); 
\draw[thick] (c5.center) -- (f2.center); 
\draw[thick] (d6.center) -- (f3.center); 
\draw[thick] (d7.center) -- (f4.center); 
\draw[thick] (d8.center) -- (f5.center);

%\node[red] at (0*\xc, 8.6*\yc) {$0$};
\draw[->,red]   (0*\xc, 9*\yc) --  (0*\xc,8*\yc) node[midway,right] {$0$};
\draw[->,red]   (1*\xc, 9*\yc) --  (1*\xc,8*\yc) node[midway,right] {$2$};
%\node[red] at (3*\xc,3*\yc) {$0$};
\draw[->,red]   (3*\xc, 3*\yc) --  (3*\xc,4*\yc) node[midway,right] {$0$};
\draw[->,red]   (4*\xc, 5*\yc) --  (4*\xc,7*\yc) node[midway,right] {$3$};
\draw[->,red]   (5*\xc, 6*\yc) --  (5*\xc,7*\yc) node[midway,right] {$0$};
%\node[red] at (5*\xc,6.7*\yc) {$0$};
  \node[blue] (c1) at (2*\xc,5*\yc){$\bullet$}; 
  \node[blue]  (c2) at (2*\xc,6*\yc){$\bullet$}; 
  \node[blue]  (c3) at (2*\xc,7*\yc){$\bullet$}; 
  \node[blue]  (c4) at (2*\xc,8*\yc){$\bullet$}; 
  \node[blue]  (c5) at (2*\xc,9*\yc){$\bullet$}; 
\end{tikzpicture}
\]
\[
\begin{tikzpicture}[scale=\magn]
  \node (a1) at (0*\xc,0*\yc){$\bullet$}; 
  \node (a2) at (0*\xc,1*\yc){$\bullet$}; 
  \node (a3) at (0*\xc,2*\yc){$\bullet$}; 
  \node (a4) at (0*\xc,3*\yc){$\bullet$}; 
  \node (a5) at (0*\xc,4*\yc){$\bullet$}; 
  \node (a6) at (0*\xc,5*\yc){$\bullet$}; 
  \node (a7) at (0*\xc,6*\yc){$\bullet$}; 
  \node (a8) at (0*\xc,7*\yc){$\bullet$}; 

  \node (b1) at (1*\xc,1*\yc){$\bullet$}; 
  \node (b2) at (1*\xc,2*\yc){$\bullet$}; 
  \node (b3) at (1*\xc,3*\yc){$\bullet$}; 
  \node (b4) at (1*\xc,4*\yc){$\bullet$}; 
  \node (b5) at (1*\xc,5*\yc){$\bullet$}; 
  \node (b6) at (1*\xc,6*\yc){$\bullet$};
  \node (b7) at (1*\xc,7*\yc){$\bullet$};

  \node (c1) at (2*\xc,4*\yc){$\bullet$}; 
  \node (c2) at (2*\xc,5*\yc){$\bullet$}; 
  \node (c3) at (2*\xc,6*\yc){$\bullet$}; 
  \node (c4) at (2*\xc,7*\yc){$\bullet$}; 
  \node (c5) at (2*\xc,8*\yc){$\bullet$}; 

  \node (d1) at (3*\xc,4*\yc){$\bullet$}; 
  \node (d2) at (3*\xc,5*\yc){$\bullet$}; 
  \node (d3) at (3*\xc,6*\yc){$\bullet$}; 
  \node (d4) at (3*\xc,7*\yc){$\bullet$}; 
  \node (d5) at (3*\xc,8*\yc){$\bullet$}; 
  \node (d6) at (3*\xc,9*\yc){$\bullet$}; 
  \node (d7) at (3*\xc,10*\yc){$\bullet$}; 
  \node (d8) at (3*\xc,11*\yc){$\bullet$}; 
  \node (d9) at (3*\xc,12*\yc){$\bullet$}; 

  \node (e1) at (4*\xc,7*\yc){$\bullet$}; 
  \node (e2) at (4*\xc,8*\yc){$\bullet$}; 
  \node (e3) at (4*\xc,9*\yc){$\bullet$}; 
  \node (e4) at (4*\xc,10*\yc){$\bullet$}; 
  \node (e5) at (4*\xc,11*\yc){$\bullet$}; 

  \node (f1) at (5*\xc,8*\yc){$\bullet$}; 
  \node (f2) at (5*\xc,9*\yc){$\bullet$}; 
  \node (f3) at (5*\xc,10*\yc){$\bullet$}; 
  \node (f4) at (5*\xc,11*\yc){$\bullet$}; 
  \node (f5) at (5*\xc,12*\yc){$\bullet$}; 
  \node (f6) at (5*\xc,13*\yc){$\bullet$}; 
  \node (f7) at (5*\xc,14*\yc){$\bullet$}; 
  \node (f8) at (5*\xc,15*\yc){$\bullet$};

\draw[thick] (a2.center) -- (b1.center); 
\draw[thick] (a3.center) -- (b2.center); 
\draw[thick] (a4.center) -- (b3.center); 
\draw[thick] (a5.center) -- (d1.center); 
\draw[thick] (a6.center) -- (d2.center); 
\draw[thick] (a7.center) -- (d3.center); 
\draw[thick] (a8.center) -- (e1.center); 
\draw[thick] (c5.center) -- (f1.center); 
\draw[thick] (d6.center) -- (f2.center); 
\draw[thick] (d7.center) -- (f3.center); 
\draw[thick] (d8.center) -- (f4.center);

%\node[red] at (0*\xc, 8.6*\yc) {$0$};
\draw[->,red]   (0*\xc, 9*\yc) --  (0*\xc,8*\yc) node[midway,right] {$0$};
\draw[->,red]   (1*\xc, 9*\yc) --  (1*\xc,8*\yc) node[midway,right] {$1$};
\draw[->,red]   (3*\xc, 2*\yc) --  (3*\xc,3*\yc) node[midway,right] {$0$};
%\node[red] at (3*\xc,3*\yc) {$0$};
\draw[->,red]   (4*\xc, 4*\yc) --  (4*\xc,6*\yc) node[midway,right] {$3$};
\draw[->,red]   (5*\xc, 6*\yc) --  (5*\xc,7*\yc) node[midway,right] {$1$};
  \node[blue] (c1) at (2*\xc,4*\yc){$\bullet$}; 
  \node[blue] (c2) at (2*\xc,5*\yc){$\bullet$}; 
  \node[blue] (c3) at (2*\xc,6*\yc){$\bullet$}; 
  \node[blue] (c4) at (2*\xc,7*\yc){$\bullet$}; 
  \node[blue] (c5) at (2*\xc,8*\yc){$\bullet$}; 
\end{tikzpicture}
\qquad\qquad
\begin{tikzpicture}[scale=\magn]
  \node (a1) at (0*\xc,0*\yc){$\bullet$}; 
  \node (a2) at (0*\xc,1*\yc){$\bullet$}; 
  \node (a3) at (0*\xc,2*\yc){$\bullet$}; 
  \node (a4) at (0*\xc,3*\yc){$\bullet$}; 
  \node (a5) at (0*\xc,4*\yc){$\bullet$}; 
  \node (a6) at (0*\xc,5*\yc){$\bullet$}; 
  \node (a7) at (0*\xc,6*\yc){$\bullet$}; 
  \node (a8) at (0*\xc,7*\yc){$\bullet$}; 

  \node (b1) at (1*\xc,2*\yc){$\bullet$}; 
  \node (b2) at (1*\xc,3*\yc){$\bullet$}; 
  \node (b3) at (1*\xc,4*\yc){$\bullet$}; 
  \node (b4) at (1*\xc,5*\yc){$\bullet$}; 
  \node (b5) at (1*\xc,6*\yc){$\bullet$}; 
  \node (b6) at (1*\xc,7*\yc){$\bullet$};
  \node (b7) at (1*\xc,8*\yc){$\bullet$};

  \node (c1) at (2*\xc,5*\yc){$\bullet$}; 
  \node (c2) at (2*\xc,6*\yc){$\bullet$}; 
  \node (c3) at (2*\xc,7*\yc){$\bullet$}; 
  \node (c4) at (2*\xc,8*\yc){$\bullet$}; 
  \node (c5) at (2*\xc,9*\yc){$\bullet$}; 

  \node (d1) at (3*\xc,5*\yc){$\bullet$}; 
  \node (d2) at (3*\xc,6*\yc){$\bullet$}; 
  \node (d3) at (3*\xc,7*\yc){$\bullet$}; 
  \node (d4) at (3*\xc,8*\yc){$\bullet$}; 
  \node (d5) at (3*\xc,9*\yc){$\bullet$}; 
  \node (d6) at (3*\xc,10*\yc){$\bullet$}; 
  \node (d7) at (3*\xc,11*\yc){$\bullet$}; 
  \node (d8) at (3*\xc,12*\yc){$\bullet$}; 
  \node (d9) at (3*\xc,13*\yc){$\bullet$}; 

  \node (e1) at (4*\xc,8*\yc){$\bullet$}; 
  \node (e2) at (4*\xc,9*\yc){$\bullet$}; 
  \node (e3) at (4*\xc,10*\yc){$\bullet$}; 
  \node (e4) at (4*\xc,11*\yc){$\bullet$}; 
  \node (e5) at (4*\xc,12*\yc){$\bullet$}; 

  \node (f1) at (5*\xc,8*\yc){$\bullet$}; 
  \node (f2) at (5*\xc,9*\yc){$\bullet$}; 
  \node (f3) at (5*\xc,10*\yc){$\bullet$}; 
  \node (f4) at (5*\xc,11*\yc){$\bullet$}; 
  \node (f5) at (5*\xc,12*\yc){$\bullet$}; 
  \node (f6) at (5*\xc,13*\yc){$\bullet$}; 
  \node (f7) at (5*\xc,14*\yc){$\bullet$}; 
  \node (f8) at (5*\xc,15*\yc){$\bullet$};

\draw[thick] (a3.center) -- (b1.center); 
\draw[thick] (a4.center) -- (b2.center); 
\draw[thick] (a5.center) -- (b3.center); 
\draw[thick] (a6.center) -- (d1.center); 
\draw[thick] (a7.center) -- (d2.center); 
\draw[thick] (a8.center) -- (d3.center); 
\draw[thick] (b7.center) -- (f1.center); 
\draw[thick] (c5.center) -- (f2.center); 
\draw[thick] (d6.center) -- (f3.center); 
\draw[thick] (d7.center) -- (f4.center); 
\draw[thick] (d8.center) -- (f5.center);

\draw[->,red]   (0*\xc, 9*\yc) --  (0*\xc,8*\yc) node[midway,right] {$1$};
\draw[->,red]   (1*\xc, 10*\yc) --  (1*\xc,9*\yc) node[midway,right] {$1$};
\draw[->,red]   (3*\xc, 3*\yc) --  (3*\xc,4*\yc) node[midway,right] {$0$};
%\node[red] at (3*\xc,3*\yc) {$0$};
\draw[->,red]   (4*\xc, 5*\yc) --  (4*\xc,7*\yc) node[midway,right] {$3$};
\draw[->,red]   (5*\xc, 6*\yc) --  (5*\xc,7*\yc) node[midway,right] {$0$};
%\node[red] at (5*\xc,6.7*\yc) {$0$};

  \node[blue] (c1) at (2*\xc,5*\yc){$\bullet$}; 
  \node[blue] (c2) at (2*\xc,6*\yc){$\bullet$}; 
  \node[blue] (c3) at (2*\xc,7*\yc){$\bullet$}; 
  \node[blue] (c4) at (2*\xc,8*\yc){$\bullet$}; 
  \node[blue] (c5) at (2*\xc,9*\yc){$\bullet$}; 
  
\end{tikzpicture}
\]
\caption{The lace diagrams associated to the dimension vector $\ud=(8,7,5,9,5,8)$, and raising vectors $(0,1,\star,0,4,0)$ (top left), $(0,2,\star,0,3,0)$ (top right), $(0,1,\star,0,3,1)$ (bottom left), $(1,1,\star,0,3,0)$ (bottom right). In fact, these raising vectors are the solutions of (QIP'), hence these lace diagrams describe the maximal-dimensional components of $\Sigma_{\ud}$.}   
\label{fig:875958 raising vectors}
\end{figure}
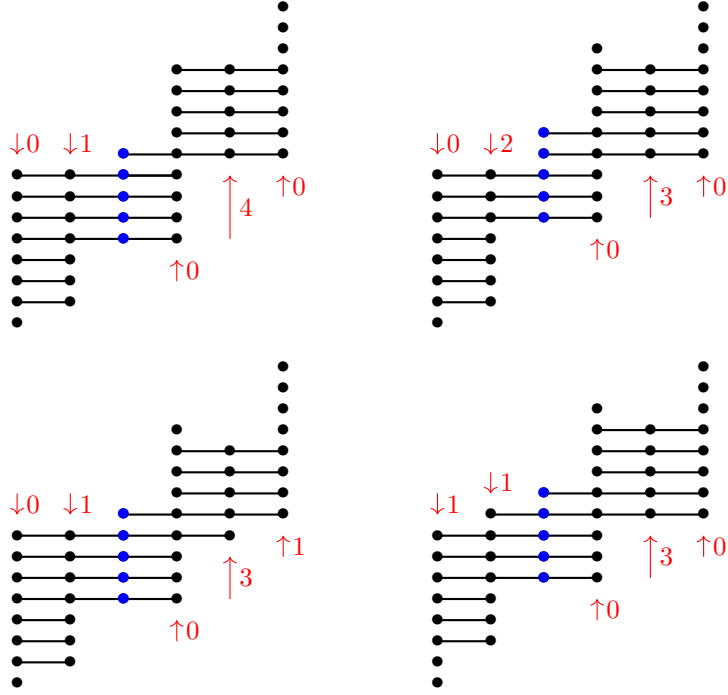

\begin{example} \rm
    Let $\ud=(8,7,5,9,5,8)$. The lace diagrams for $k=2$ and the raising vectors $(0,1,\star,0,4,0)$, $(0,2,\star,0,3,0)$, $(0,1,\star,0,3,1)$, $(1,1,\star,0,3,0)$ are shown in Figure~\ref{fig:875958 raising vectors}.
\end{example}

\begin{proposition}
	Let $\rv$ be a \rvector. The orbit $\Orep{\rv}$ is contained in $\Sigma_{\dv}$.
\end{proposition}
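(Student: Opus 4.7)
The plan is to reduce $\Orep{\rv} \subseteq \Sigma_{\dv}$ to a combinatorial statement about the lace diagram $\Dset{\rv}$, namely that no horizontal lace spans all $n+1$ columns.

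First I would observe that the product map $(A_1,\ldots,A_n) \mapsto A_n\cdots A_1$ is $\GL_{\dv}$-equivariant modulo the action on the outermost factors: the intermediate $g_i$'s cancel in adjacent pairs, giving $(g_nA_ng_{n-1}^{-1})\cdots(g_1A_1g_0^{-1}) = g_n (A_n\cdots A_1) g_0^{-1}$. Hence $\Sigma_{\dv}$ is $\GL_{\dv}$-invariant, and to conclude $\Orep{\rv} \subseteq \Sigma_{\dv}$ it suffices to verify $A_n\cdots A_1=0$ at the distinguished 01-matrix representative of $\Orep{\rv}$ read off from $\Dset{\rv}$ via the recipe recalled after Gabriel's theorem.

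For that representative, the composition $A_n\cdots A_1$ sends the basis vector indexed by a dot $(0,y)$ in column $0$ to a nonzero basis vector only if there is an uninterrupted chain of horizontal segments at height $y$ linking column $0$ to column $n$; in particular, both columns must contain a dot at the same height $y$. So the task reduces to showing that the $y$-ranges of column $0$ and column $n$ in $\Dset{\rv}$ are disjoint. Reading off the definition, column $0$ occupies heights in $[d_k - d_0 - \sum_{i=0}^{k-1} e_i,\; d_k - \sum_{i=0}^{k-1} e_i)$, and column $n$ occupies heights in $[\sum_{i=k+1}^{n} e_i,\; d_n + \sum_{i=k+1}^{n} e_i)$. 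The rising-vector identity $\sum_{i\neq k} e_i = d_k$ yields $d_k - \sum_{i=0}^{k-1} e_i = \sum_{i=k+1}^{n} e_i$, so the upper endpoint of the first interval equals the lower endpoint of the second, and the two intervals are disjoint.

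The only content beyond bookkeeping is this arithmetic check: the rising-vector constraint $\sum_{i\neq k} e_i = d_k$ is engineered precisely so that the top dot of column $0$ sits one step below the bottom dot of column $n$, which is exactly what prevents a height-level lace from reaching across all of $\Dset{\rv}$. I do not anticipate any serious obstacle — the $\GL_{\dv}$-invariance reduction and the height computation are both routine once the definitions are unwound.
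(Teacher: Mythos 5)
Your proof is correct and follows essentially the same route as the paper: the heart of both arguments is that the constraint $\sum_{i\neq k} e_i = d_k$ forces the height ranges of column $0$ and column $n$ of $\Dset{\rv}$ to be disjoint, so no lace spans all columns. The only cosmetic difference is that you verify $A_n\cdots A_1=0$ directly on the 01-matrix representative and invoke $\GL_{\dv}$-invariance, while the paper phrases the same fact as the multiplicity of $\MM_{0n}$ in $\Vrep{\rv}$ being zero.
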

\begin{proof}
	The top dot in the first column of $\Dset{\rv}$  is $(0,d_k-1-e_0-\ldots -e_{k-1})$, while the bottom dot in the last column is $(n,e_{k+1}+\ldots + e_{n})$. We have $\sum e_i=d_k$, therefore there is no $y\in \Z$ such that $\Dset{\rv}$ has dots in both the first and the last columns at height $y$. Therefore, the multiplicity of the indecomposable module $\Irred{0n}$ in $\Vrep{\rv}$ is zero and $\Orep{\rv}\subset\Sigma_{\dv}$.
\end{proof}
We consider the following quadratic integer program on the set of raising vectors
$\rv=(e_0,\ldots, e_{k-1}, \star, e_{k+1} \ldots, e_n).$
\begin{align}
\tag{QIP'} \label{eq:1}
& \begin{array}{rl} \min & \displaystyle \QIPF(\rv)=\sum_{i\neq k}e_i\cdot(d_i-d_k) +\sum_{i\le j} e_ie_j\,, \\[.1in] \text{s.t.} & \displaystyle \ e_i\in \N, \quad \sum_{i\neq k} e_i=d_k.
\end{array}
\end{align}
\begin{remark} \label{rem:QIP} \rm
	The quadratic integer programs \eqref{eq:QIP} and \eqref{eq:1} are equivalent. They have the same minimum. Moreover, let $\sigma$ be a permutation of the set $\{0,\ldots, n\}$ such that $\sigma(k)=0$ and $d_{i}=d'_{\sigma(i)}$ for every $i$. Then $\sigma$ induces a bijection between the sets of solutions, i.e. $(e_1,\dots,e_n)$ is a solution of \eqref{eq:QIP} if and only if $(e_{\sigma(1)},\dots,e_{\sigma(k-1)},\star,e_{\sigma(k+1)},\dots,e_{\sigma(n)})$ is a solution of \eqref{eq:1}.
\end{remark}
We can now state our main result. Recall that the closures of the minimal codimensional orbits in $\Sigma_{\ud}$ are exactly the minimal codimensional components of $\Sigma_{\ud}$. 
\begin{theorem} \label{tw:main}
	The assignment $\rv \mapsto \Orep{\rv}$ induces a bijection
	$$\Orep{-}\colon\{\text{solutions of  \eqref{eq:1}}\} \to \{\text{minimal codimension orbits in $\Sigma_{\dv}$}\}. $$
\end{theorem}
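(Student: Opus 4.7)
The plan is to combine a direct codimension computation with Theorem~\ref{thm:LR2} and Remark~\ref{rem:QIP}. Throughout, $\Orep{\rv}=\OO_{\um(\rv)}$ depends only on the isomorphism class of the quiver module $\Vrep{\rv}$, equivalently on the Kostant partition $\um(\rv)$.

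The technical heart is the codimension formula
\[
\codim \Orep{\rv} \;=\; \QIPF(\rv)\qquad\text{for every raising vector $\rv$.}
\]
By Lemma~\ref{lem:voight} this is equivalent to $\dim\Ext(\Vrep{\rv},\Vrep{\rv})=\QIPF(\rv)$. Since every segment of $\Dset{\rv}$ is horizontal of length one, $\Vrep{\rv}$ splits as $\bigoplus_{y}\Vrep{\rv}_{\{y\}}$ over integer heights, and each summand $\Vrep{\rv}_{\{y\}}$ is itself a direct sum of interval modules $\Irred{ab}$ coming from the maximal runs of consecutive columns that carry a dot at height $y$. Applying~\eqref{eqn:ExtIndecomposables} turns $\dim\Ext$ into a finite sum of Iverson brackets over ordered pairs of such interval pieces. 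I will organize the surviving terms by the positions of the two contributing rows relative to the central column $k$, and use Corollary~\ref{cor:Ext} to eliminate summands whose target lace starts at $0$ or whose source lace ends at $n$. Rewriting the column boundaries in terms of the partial sums $L_i=\sum_{j=i}^{k-1}e_j$ and $R_i=\sum_{j=k+1}^{i}e_j$, the remaining counts consolidate into exactly $\sum_{i\ne k}e_i(d_i-d_k)+\sum_{i\le j}e_ie_j=\QIPF(\rv)$.

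Once the codimension formula is in hand, the rest is bookkeeping. I will check that $\rv\mapsto\um(\rv)$ is injective by reading $\rv$ off from the multiplicities of $\um(\rv)$: the multiplicities of intervals with prescribed left or right endpoint determine the heights at which each column of $\Dset{\rv}$ starts and ends, hence each $L_i$ and $R_i$ and therefore every $e_j$. Combining this with the codimension formula, Remark~\ref{rem:QIP} gives a bijection between the solution sets of \eqref{eq:QIP} and \eqref{eq:1}, so by Theorem~\ref{thm:LR2} the minimum of $\QIPF$ equals $C$ and is attained at exactly $\theta$ raising vectors. These $\theta$ solutions yield $\theta$ pairwise distinct orbits $\Orep{\rv}$ in $\Sigma_{\dv}$, each of codimension exactly $C$. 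Since $\Sigma_{\dv}$ has exactly $\theta$ minimal-codimension components (again Theorem~\ref{thm:LR2}), these orbits exhaust them, proving the bijection.

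The principal obstacle is the codimension computation. A row of $\Dset{\rv}$ need not be a single contiguous interval of columns: when an interior column is shorter than its neighbors, the dot pattern at certain heights can split into several pieces, and then one row contributes several interval modules to $\Vrep{\rv}$. Tracking all such contributions, distinguishing them from the dominant central and outer pieces, and verifying that their total collapses into the compact quadratic $\QIPF(\rv)$ is the bulk of the work.
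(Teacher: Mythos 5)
Your overall architecture is exactly the paper's: prove $\codim\Orep{\rv}=\dim\Ext(\Vrep{\rv},\Vrep{\rv})=\QIPF(\rv)$ via Lemma~\ref{lem:voight}, prove injectivity by reading the $e_i$ off from multiplicities of interval modules, and then conclude by cardinality matching through Remark~\ref{rem:QIP} and Theorem~\ref{thm:LR2}. That skeleton is sound, and your injectivity idea is essentially the paper's (one caution: the module only determines the Kostant partition, not the diagram, and ``intervals with prescribed left \emph{or} right endpoint'' is not enough, since e.g.\ a dot in column $i-1$ can lack a right neighbour merely because column $i$ is shorter; you must prescribe \emph{both} endpoints, i.e.\ read $e_i$ as the multiplicity of $\Irred{0,i-1}$ for $i>k$ and of $\Irred{i+1,n}$ for $i<k$, which do occur only in the two ``easy'' height bands).

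The genuine gap is the codimension formula itself, which you correctly identify as the heart and then leave as ``the bulk of the work.'' Your plan --- split $\Vrep{\rv}$ row by row into maximal runs and sum \eqref{eqn:ExtIndecomposables} over ordered pairs of runs --- is not wrong in principle, but as sketched it cannot close: Corollary~\ref{cor:Ext} only kills pairs whose source ends at $n$ or whose target starts at $0$, so you are still left with all pairs of runs lying entirely above height $d_k$ (or entirely below height $0$), precisely the region where, as you note, rows fragment unpredictably when an interior column is short. These pairs do not individually vanish by inspection of endpoints; what makes the sum collapse to $\QIPF(\rv)$ is the structural fact that the sub-diagram at heights $\ge \sum_{i>k}e_i$ (and likewise at heights $<d_k$ on the other side) is, after a shift, a bottom-aligned all-segments diagram, so that by Proposition~\ref{tw:OpenOrbit}/Corollary~\ref{cor:OpenOrbit} its self-$\Ext$ is zero. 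The paper's proof of Proposition~\ref{pro:res1} uses exactly this: decompose $\Vrep{\rv}=B_1\oplus A_1\oplus A_2\oplus B_2$ by height bands, kill $\Ext(B_1\oplus A_1,B_1\oplus A_1)$ and $\Ext(B_2\oplus A_2,B_2\oplus A_2)$ by Corollary~\ref{cor:OpenOrbit}, kill $\Ext(B_1,B_2)$ and $\Ext(B_2,B_1)$ directly from \eqref{eqn:ExtIndecomposables}, and then only three cross terms remain, each computed by counting dots per column rather than enumerating runs. Without invoking (or reproving) that vanishing, your Iverson-bracket bookkeeping has no mechanism to dispose of the within-band pairs, so the central identity $\dim\Ext(\Vrep{\rv},\Vrep{\rv})=\QIPF(\rv)$ remains unproved in your proposal, and with it the well-definedness of the map onto minimal-codimension orbits.
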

\begin{corollary}
	Due to Remark \ref{rem:QIP}, the above theorem gives a bijection between the set of solutions of \eqref{eq:QIP} and the set of minimal codimension orbits in $\Sigma_{\dv}$.
	%Suppose that $\sigma$ is a permutation from Remark \ref{rem:QIP} and $(s_1,\ldots,s_n)$ solution of the $\QIP$. Ww associate to it a rising vector $e_i=s_{\sigma(i)}$ and an orbit $\Vrep{\rv}
\end{corollary}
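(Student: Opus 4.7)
The plan is to show $\codim(\Orep{\rv}\subset \Rep_{\dv}) = \QIPF(\rv)$ for every \rvector\ $\rv$, and then combine this with Theorem~\ref{thm:LR2} (via Remark~\ref{rem:QIP}) to identify the $\theta$ solutions of~\eqref{eq:1} with the $\theta$ minimal-codimension orbits in $\Sigma_{\dv}$.

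By Lemma~\ref{lem:voight}, $\codim(\Orep{\rv}) = \dim \Ext(\Vrep{\rv},\Vrep{\rv})$. Set $\alpha = d_k - \sum_{i<k} e_i$ and decompose $\Vrep{\rv} = N_< \oplus N_{\ge}$ with $N_< = \Vrep{\rv}_{(-\infty,\alpha)}$ and $N_{\ge} = \Vrep{\rv}_{[\alpha,\infty)}$. This is a quiver-module direct-sum decomposition because each indecomposable summand of $\Vrep{\rv}$ comes from a single horizontal row of $\Dset{\rv}$. The Ext group splits into four pieces, three of which vanish. First, $N_<$ and $N_\ge$ are each isomorphic to the open-orbit module of their respective dimension vector: the multiset of row intervals in either piece coincides with that of the bot-aligned diagram of Proposition~\ref{tw:OpenOrbit}, which I would establish from the shift formulas defining $\Dset{\rv}$ after tracking column tops and bots against $\alpha$; hence Corollary~\ref{cor:OpenOrbit} gives $\Ext(N_<,N_<)=\Ext(N_\ge,N_\ge)=0$. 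Second, $\Ext(N_\ge, N_<) = 0$: for summands $\Irred{cd}$ of $N_\ge$ and $\Irred{ab}$ of $N_<$ arising from rows at heights $y_2 \ge \alpha$ and $y_1 < \alpha$, a case analysis on $b$ and $c$ relative to $k$ shows that $a>c$ and $b>d$ cannot both hold (if they did, col $c$ would be absent at $y_1$, forcing $y_1<\text{bot}_c$; col $b$ would be absent at $y_2$, forcing $\text{top}_b<y_2$; and the explicit bot/top formulas produce a contradiction), so at least one of the conditions $c+1\le a$, $d+1\le b$ of~\eqref{eqn:ExtIndecomposables} fails.

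Thus $\dim \Ext(\Vrep{\rv},\Vrep{\rv}) = \dim \Ext(N_<,N_\ge)$. The main combinatorial step is to identify this with $\QIPF(\rv)$. Using the explicit open-orbit descriptions of $N_<$ and $N_\ge$, the count of pairs $(\Irred{ab},\Irred{cd})$ with $a+1\le c\le b+1\le d$ organizes by the columns $i\ne k$ at which the lace pattern ``jumps'': each $e_i$ records the number of rows where a column is added to the right (if $i>k$) or dropped from the left (if $i<k$). Summing Ext contributions across pairs of these jumps and grouping by column produces the linear terms $e_i(d_i-d_k)$ together with the quadratic terms $e_ie_j$ that make up $\QIPF(\rv)$.

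With the codimension formula in hand, Remark~\ref{rem:QIP} and Theorem~\ref{thm:LR2} give exactly $\theta$ minimizers of $\QIPF$ attaining value $C$. Each such $\rv$ yields an orbit $\Orep{\rv}\subseteq\Sigma_{\dv}$ of codimension $C$, whose closure is a minimal-codimension component. Injectivity of $\rv\mapsto\Orep{\rv}$ follows because $\rv$ is recoverable from the Kostant partition of $\Vrep{\rv}$---for example, $e_i$ with $i>k$ equals the total multiplicity $\sum_a m_{a,i-1}$ of summands ending at column $i-1$---and surjectivity follows by counting against Theorem~\ref{thm:LR2}. The main obstacle is the combinatorial identification $\dim\Ext(N_<,N_\ge)=\QIPF(\rv)$; the open-orbit characterization of $N_<, N_\ge$ and the vanishing of $\Ext(N_\ge,N_<)$ are secondary technical points requiring careful lace-diagram bookkeeping.
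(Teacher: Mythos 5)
The corollary itself needs only one line: compose the bijection $\rv\mapsto\Orep{\rv}$ of Theorem~\ref{tw:main} with the bijection between solutions of \eqref{eq:QIP} and \eqref{eq:1} provided by Remark~\ref{rem:QIP}. You instead re-derive Theorem~\ref{tw:main}, and your architecture is essentially the paper's own (your $N_<$ and $N_\ge$ are the paper's $B_2\oplus A_2$ and $A_1\oplus B_1$; the self-Ext vanishing via Corollary~\ref{cor:OpenOrbit}, the vanishing of $\Ext(N_\ge,N_<)$ via \eqref{eqn:ExtIndecomposables}, and the final count against Theorem~\ref{thm:LR2} all match). The problem is that two steps of your re-derivation do not hold up as written. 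First, the central identity $\dim\Ext(N_<,N_\ge)=\QIPF(\rv)$ --- which is exactly Proposition~\ref{pro:res1} and carries all the content --- is only gestured at (``organize by jumps and group by column''); the paper proves it by refining to $\Ext(B_2,A_1)\oplus\Ext(A_2,A_1)\oplus\Ext(A_2,B_1)$ and computing each block by counting dots of $B_1$ (resp.\ $B_2$) in a fixed column. As submitted, this is a gap, which you yourself flag as ``the main obstacle''.

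Second, your injectivity argument rests on a false formula. For $i>k$, the coordinate $e_i$ is \emph{not} the total multiplicity $\sum_a m_{a,i-1}$ of summands ending at column $i-1$, because $B_1$ can also contribute such summands. Concretely, take $\ud=(1,5,1)$, $k=0$, $\rv=(\star,0,1)$ (which is in fact the unique minimizer of \eqref{eq:1} here): then $\Vrep{\rv}\simeq \Irred{01}\oplus\Irred{12}\oplus 3\,\Irred{11}$, so the summands ending at column $1$ have total multiplicity $4$, while $e_2=1$. The correct invariant, as in Proposition~\ref{pro:res2}, is the multiplicity of the single indecomposable $\Irred{0,i-1}$ (resp.\ $\Irred{i+1,n}$ for $i<k$), which occurs only in $A_2$ (resp.\ $A_1$); with that replacement, and with the Ext count actually carried out, your argument becomes the paper's proof of Theorem~\ref{tw:main}, from which the corollary follows by the one-line composition above.
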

 To prove the above theorem, we need two propositions whose proofs we relegate to the next section.
\begin{proposition} \label{pro:res1}
	Let $\rv$ be a \rvector. We have 
	$$\dim\Ext(\Vrep{\rv},\Vrep{\rv})=\QIPF(\rv).$$ 
\end{proposition}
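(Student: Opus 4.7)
The plan is to split $\Vrep{\rv}$ horizontally by dot height and exploit vanishing of $\Ext$ between most of the pieces. Since every segment in $\Dset{\rv}$ is horizontal, the submodules $\Vrep{\rv}_L, \Vrep{\rv}_M, \Vrep{\rv}_U$ supported on dots of heights $y<0$, $0\le y<d_k$, and $y\ge d_k$ respectively give a direct sum decomposition $\Vrep{\rv}=\Vrep{\rv}_L\oplus\Vrep{\rv}_M\oplus\Vrep{\rv}_U$. Writing $E_x:=\sum_{i=x}^{k-1}e_i$ and $F_x:=\sum_{i=k+1}^{x}e_i$, one has $F_c\le F_n=d_k-E_0\le d_k$ for all $c>k$, so $\Vrep{\rv}_U$ is bottom-aligned at height $d_k$ on columns $[k+1,n]$ with every possible horizontal segment drawn; Proposition~\ref{tw:OpenOrbit} shows it represents the open orbit and Corollary~\ref{cor:OpenOrbit} gives $\Ext(\Vrep{\rv}_U,\Vrep{\rv}_U)=0$. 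The reflection $y\mapsto -1-y$ treats $\Vrep{\rv}_L$ identically. The column inequalities in~\eqref{eqn:ExtIndecomposables} also kill $\Ext$ between the pairs $(L,U),(U,L),(U,M),(M,L)$, because an $L$-strip is supported in $[0,k-1]$ and a $U$-strip in $[k+1,n]$.

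Only the three pieces $\dim\Ext(\Vrep{\rv}_L,\Vrep{\rv}_M)$, $\dim\Ext(\Vrep{\rv}_M,\Vrep{\rv}_M)$, $\dim\Ext(\Vrep{\rv}_M,\Vrep{\rv}_U)$ remain, and all three hinge on the structure of the middle block. For each $y\in[0,d_k)$ the middle strip is the contiguous interval $[a(y),b(y)]$ with $a(y)\le k\le b(y)$, and the $a$-jumps occur at heights $y=d_k-E_x$ while the $b$-jumps occur at $y=F_x$. Since $E_0+F_n=d_k$, every $b$-jump happens at height $\le F_n$ and every $a$-jump at height $\ge F_n$, so the path $(a(y),b(y))$ is L-shaped: a horizontal arm with $a(y)=0$ and $b(y)\in[k,n-1]$ over $y\in[0,F_n-1]$ (total width $F_n$), joined at $y=F_n$ to a vertical arm with $a(y)\in[1,k]$ and $b(y)=n$ over $y\in[F_n,d_k-1]$ (total width $E_0$); in the degenerate cases $F_n=0$ or $E_0=0$ one of the arms is empty.

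Given the L-shape, the three counts proceed as follows. Since $a(y)\le k\le b(y')$ for any $y,y'$, the condition $i+1\le u\le j+1\le v$ of~\eqref{eqn:ExtIndecomposables} between two middle strips reduces to strict increase of both $a$ and $b$, and these pairs are exactly $(y_1,y_2)$ with $y_1$ in the horizontal arm and $y_2$ in the vertical arm, giving $\dim\Ext(\Vrep{\rv}_M,\Vrep{\rv}_M)=F_n\cdot E_0=\sum_{i<k<j}e_ie_j$. For $\dim\Ext(\Vrep{\rv}_M,\Vrep{\rv}_U)$, since $u\ge k+1\ge a(y)+1$ is automatic, the condition reduces to ``the $U$-strip covers column $b(y)+1$''; the number of $U$-strips through column $c>k$ equals the number of $U$-dots, $F_c+d_c-d_k$, so summing over the horizontal arm (where $b(y)=k+r$ has width $e_{k+r+1}$) yields
\[
\sum_{c>k}e_c(F_c+d_c-d_k)=\sum_{c>k}e_c(d_c-d_k)+\sum_{k<i\le j}e_ie_j
\]
via the telescoping identity $\sum_{c>k}e_cF_c=\sum_{k<i\le j}e_ie_j$. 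The symmetric computation gives $\dim\Ext(\Vrep{\rv}_L,\Vrep{\rv}_M)=\sum_{i<k}e_i(d_i-d_k)+\sum_{i\le j<k}e_ie_j$, and the three contributions sum to $\QIPF(\rv)$.

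The hardest step will be proving the L-shape cleanly and uniformly, including non-generic situations where several $e_i$ vanish and many $a$- or $b$-jumps coincide; the identity $E_0+F_n=d_k$ keeps the separation ``$b$-jumps end at $F_n$, $a$-jumps start at $F_n$'' valid even then, so the arm widths are always $F_n$ and $E_0$. A secondary concern is that $\Vrep{\rv}_L$ or $\Vrep{\rv}_U$ can have non-contiguous strips at individual heights when the functions $d_c+E_c$ or $F_c+d_c$ are not unimodal, but the $L$-$M$ and $M$-$U$ counts only use the column dot-multiplicities, so they are insensitive to how the strips at a given height decompose.
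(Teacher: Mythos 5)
Your proof is correct and follows essentially the same route as the paper: you decompose $\Vrep{\rv}$ by dot height, kill the remaining $\Ext$ blocks using \eqref{eqn:ExtIndecomposables} and the open-orbit vanishing of Corollary~\ref{cor:OpenOrbit}, and the three surviving blocks $\Ext(\Vrep{\rv}_L,\Vrep{\rv}_M)$, $\Ext(\Vrep{\rv}_M,\Vrep{\rv}_M)$, $\Ext(\Vrep{\rv}_M,\Vrep{\rv}_U)$ are exactly the paper's $\Ext(B_2,A_1)$, $\Ext(A_2,A_1)$, $\Ext(A_2,B_1)$, evaluated by the same dot-counting in columns (your middle block is $A_1\oplus A_2$ and your ``L-shape'' is precisely the decomposition \eqref{eq:3}). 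The only cosmetic differences are the three-part rather than four-part height split, the reflection argument in place of invoking Corollary~\ref{cor:Ext}, and the explicit verification of the jump structure of the middle strips.
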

\begin{proposition}\label{pro:res2}
	Let $\underline{a}$ and $\underline{b}$ be \rvector s. Suppose that $\Vrep{\underline{a}}\simeq \Vrep{\underline{b}}$. Then $\underline{a}=\underline{b}$.
\end{proposition}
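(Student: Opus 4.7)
The plan is to show that the raising vector $\rv$ is completely determined by the rank pattern of its module $\Vrep{\rv}$—that is, by the collection of ranks $r_{i,j} = \rk(\phi_j\,\phi_{j-1}\cdots \phi_{i+1})$ for $0 \leq i \leq j \leq n$, where the $\phi_\ell \colon V_{\ell-1}\to V_\ell$ denote the structure maps of the quiver module. Since the rank pattern is an invariant of the isomorphism class (equivalently, a function of the Kostant partition, as recalled after the definition of $\um$ and $\ur$), this will immediately yield the proposition.

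For a raising vector $\rv = (e_0,\ldots,e_{k-1},\star,e_{k+1},\ldots,e_n)$ introduce the partial sums $T_j := \sum_{i=k+1}^{j} e_i$ for $j \geq k$ and $S_j := \sum_{i=j}^{k-1} e_i$ for $j \leq k$, with $T_k = S_k = 0$. The key claim is that
\[
r_{k,j} = d_k - T_j \quad \text{for } j \geq k, \qquad r_{j,k} = d_k - S_j \quad \text{for } j \leq k.
\]
Both identities are read off directly from the lace diagram $\Dset{\rv}$: $r_{i,j}$ equals the number of integer heights $y$ at which columns $i, i+1, \ldots, j$ all contain a dot. By construction, column $k$ has dots at heights $[0, d_k - 1]$ and column $\ell \in (k, j]$ at heights $[T_\ell,\, T_\ell + d_\ell - 1]$. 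The minimality $d_k = \min(\dv)$ forces $T_\ell + d_\ell - 1 \geq d_k - 1$ for every such $\ell$, so the binding upper constraint on $y$ is $y \leq d_k - 1$; the monotonicity $T_k \leq T_{k+1} \leq \cdots \leq T_j$ makes the binding lower constraint $y \geq T_j$. Hence the admissible heights form the interval $[T_j, d_k - 1]$ of size $d_k - T_j$, non-negative because $T_j \leq T_n \leq d_k$. The formula for $r_{j,k}$ is established by the symmetric argument on the left half, using the top boundaries $d_k - 1 - S_\ell$ of the columns to the left of $k$ and the minimality $d_k = \min(\dv)$ to make the analogous lower-endpoint constraints non-binding.

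Given the two formulas, the $e_i$'s are reconstructed by successive differences: $e_j = T_j - T_{j-1}$ for $j > k$, and $e_j = S_j - S_{j+1}$ for $j < k$. An isomorphism $\Vrep{\underline{a}} \simeq \Vrep{\underline{b}}$ forces equality of all ranks, hence of all $T$'s and $S$'s attached to $\underline{a}$ and $\underline{b}$, so $\underline{a} = \underline{b}$. The only step requiring genuine attention is verifying that the "other" column boundaries—the lower endpoints of the columns to the right of $k$, and the upper endpoints of the columns to the left of $k$—are indeed non-binding in the height count above; this is exactly where the hypothesis $d_k = \min(\dv)$ enters essentially, and without it the proposition would in fact fail.
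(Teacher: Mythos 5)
Your argument is correct: the rank identities $r_{k,j}=d_k-T_j$ and $r_{j,k}=d_k-S_j$ do follow from the interval descriptions of the columns of $\Dset{\rv}$, the minimality of $d_k$ is used exactly where you say it is (to make the far endpoints non-binding), and since ranks of composites of the structure maps are isomorphism invariants and $k$, $\dv$ are fixed, the successive differences recover $\rv$. However, your route is genuinely different from the paper's. The paper does not look at the rank pattern at all: it uses the decomposition $\Vrep{\rv}=B_1\oplus A_1\oplus A_2\oplus B_2$ by height bands (set up anyway for the codimension computation in Proposition~\ref{pro:res1}) and observes from \eqref{eq:3} that for $i>k$ the indecomposable $\Irred{0,i-1}$ occurs only in $A_2$, with multiplicity exactly $e_i$, and for $i<k$ the indecomposable $\Irred{i+1,n}$ occurs only in $A_1$, with multiplicity $e_i$; Krull--Schmidt then gives injectivity in two lines. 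So the paper reads off each $e_i$ as a multiplicity of a single distinguished indecomposable summand, whereas you read it off from the ranks $r_{k,j}$, $r_{j,k}$ via interval bookkeeping in the lace diagram. Your version is more self-contained (it needs only that ranks of $\phi_j\cdots\phi_{i+1}$ are isomorphism invariants, not the uniqueness part of Krull--Schmidt, and it bypasses \eqref{eq:3}), at the cost of the explicit endpoint analysis; the paper's version is shorter in context because the structural decomposition is already in place and is reused for Proposition~\ref{pro:res1}.
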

\begin{proof}[Proof of Theorem \ref{tw:main}]
	Let $\rv$ be a solution of \eqref{eq:1}. By Proposition \ref{pro:res1} and Lemma \ref{lem:voight}, we have 
	$$\codim_{\Rep_{\dv}} \left( \Orep{\rv} \right)= \dim\Ext(\Vrep{\rv},\Vrep{\rv})=\QIPF(\rv).$$
	Due to Remark \ref{rem:QIP}, this is also the minimum of \eqref{eq:QIP}. Theorem \ref{thm:LR2} implies that the orbit $\Orep{\rv}$ is of minimal codimension. It follows that the map $\Orep{-}$ is well defined. By Proposition \ref{pro:res2}, it is injective. Moreover, by Theorem \ref{thm:LR2} and Remark \ref{rem:QIP} its domain and codomain have the same number of elements. Thus, the map $\Orep{-}$ is a bijection.
\end{proof}
\section{Proof}
\subsection{The case $k=0$}
 Fix a dimension vector $\dv=(d_0,\ldots, d_n)$ such that $d_0=\min(d_0,\ldots, d_n)$. Set $k=0$. We treat this case separately because the proofs are less technical and their main idea is already apparent. Moreover, in the general case we will refer to the case $k=0$. Before starting the proofs, let us first make some general observations. \\
 \\
 Let $\rv$ be a \rvector. Split the lace diagram $\Dset{\rv}$ into two parts: one consisting of the dots with the coordinate $y$ smaller than $d_0$ and the other consisting of the remaining dots. We use the notation $A=\Vrep{\rv}_{[0,d_0)}$ and $B=\Vrep{\rv}_{[d_0,\infty)}$ and obtain the corresponding decomposition
 $$\Vrep{\rv}=A\oplus B.$$%\Vrep{\rv}_{[0,d_0)}\oplus \Vrep{\rv}_{[d_0,\infty)}.$$
 The summand $A$
 is easy to control, while
 $B$
 is more complicated. More precisely, we have
 \begin{align} \label{eq:2}
 A=\bigoplus_{i=1}^n e_i \Irred{0,i-1}\,, \qquad B=\bigoplus_{i,j>0} c_{i,j} \Irred{i,j}\,,
 \end{align} 
 for some nonnegative integers $c_{i,j}$.
 \begin{example} \rm
 	The schematic diagram below illustrates the lace diagram $\Dset{\rv}$. It is divided into two parts. The blue part is the lace diagram corresponding to the module $A$ and the red part corresponds to the module $B$. The vertical segment on the left indicates that $k=0$.
    $$   
\begin{tikzpicture}
	
	\filldraw[color=blue!20, fill=blue!20] (0*\xx,7*\yy) -- (0*\xx,0*\yy) -- (3*\xx,0*\yy) -- (3*\xx,2*\yy)-- (6*\xx,2*\yy) -- (6*\xx,3*\yy) -- (9*\xx,3*\yy)
	-- (9*\xx,4*\yy)-- (12*\xx,4*\yy)-- (12*\xx,5*\yy) -- (14*\xx,5*\yy)-- (14*\xx,7*\yy);
	
	\draw[ultra thick,blue]  (0*\xx,0*\yy) --  (0*\xx,7*\yy);
	
	\draw[thick] (0*\xx,0*\yy) -- (3*\xx,0*\yy); 
	\draw[thick] (0*\xx,1*\yy) -- (3*\xx,1*\yy); 
	\draw[thick] (0*\xx,2*\yy) -- (6*\xx,2*\yy); 
	\draw[thick] (0*\xx,3*\yy) -- (9*\xx,3*\yy); 
	\draw[thick] (0*\xx,4*\yy) -- (12*\xx,4*\yy); 
	\draw[thick] (0*\xx,5*\yy) -- (14*\xx,5*\yy); 
	\draw[thick] (0*\xx,6*\yy) -- (14*\xx,6*\yy); 
	\draw[thick] (0*\xx,7*\yy) -- (14*\xx,7*\yy);
	
	\draw[thick,red] (2*\xx,8*\yy) -- (15*\xx,8*\yy);
	\draw[thick,red] (2*\xx,9*\yy) -- (15*\xx,9*\yy);
	\draw[thick,red] (3*\xx,10*\yy) -- (15*\xx,10*\yy);
	\draw[thick,red] (3*\xx,11*\yy) -- (5*\xx,11*\yy); \draw[thick,red] (6*\xx,11*\yy) -- (15*\xx,11*\yy);
	\draw[thick,red] (3*\xx,12*\yy) -- (4*\xx,12*\yy); \draw[thick,red] (7*\xx,12*\yy) -- (15*\xx,12*\yy);
	\draw[thick,red] (3*\xx,13*\yy) -- (4*\xx,13*\yy); \draw[thick,red] (7*\xx,13*\yy) -- (15*\xx,13*\yy);
	\draw[thick,red] (8*\xx,13*\yy) -- (11*\xx,13*\yy); \draw[thick,red] (13*\xx,13*\yy) -- (15*\xx,13*\yy);
	\draw[thick,red] (9*\xx,14*\yy) -- (11*\xx,14*\yy); \draw[thick,red] (14*\xx,14*\yy) -- (15*\xx,14*\yy);
	\draw[thick,red] (9*\xx,15*\yy) -- (10*\xx,15*\yy); \draw[thick,red] (14*\xx,15*\yy) -- (15*\xx,15*\yy);
	
\end{tikzpicture}
$$
 \end{example} 
 \begin{proof} [Proof of Proposition \ref{pro:res2} for $k=0$]
 	Let $\rv$ be any \rvector. The indecomposable quiver module $\Irred{0,i-1}$ occurs only in the summand
 	$A$.
 	Hence, the coordinate $e_i$ is equal to the multiplicity of the indecomposable module $\Irred{0,i-1}$ in $\Vrep{\rv}$.	
 \end{proof}
 \begin{example} \rm
 	Consider the case $n=3$ and $k=0$. The Kostant partition of $\Vrep{\rv}$ is of the form
 	$$
 	\begin{bmatrix}
 		e_1 & e_2 & e_3 & 0 \\
 		 & * & * & * \\
 		 &  & * & * \\
 		 & & & *
 	\end{bmatrix}.
 	$$
 \end{example}
 \begin{proof} [Proof of Proposition \ref{pro:res1} for $k=0$]
 	
 	The module
 	$B$
 	satisfies the assumption of Corollary~\ref{cor:OpenOrbit}, thus
 	$$\Ext(B,B)=0.$$
 	From Corollary~\ref{cor:Ext} and the description of indecomposable components \eqref{eq:2} of $A$ we obtain
 	$$\Ext(\Vrep{\rv},A)=0.$$
 	It follows that
 %	$$\dim\Ext(\Vrep{\rv},\Vrep{\rv})=\dim\Ext(\Vrep{\rv}_{[0,d_0)},\Vrep{\rv}_{[d_0,\infty)})=
 %	\sum_{i=1}^n e_i\cdot \dim\Ext(\Irred{0,i},\Vrep{\rv}_{[d_0,\infty)}).$$
 	$$\dim\Ext(\Vrep{\rv},\Vrep{\rv})=\dim\Ext(A,B)=
 	\sum_{i=1}^n e_i\cdot \dim\Ext(\Irred{0,i-1},B).$$
 	Fix an integer $i$. Let $\Irred{}\subset B$ be an indecomposable submodule such that the space $\Ext(\Irred{0,i-1},\Irred{})$ is nonzero. By \eqref{eqn:ExtIndecomposables} its lace diagram contains a dot in column $i$, i.e. $\Irred{}\simeq \Irred{ab}$ such that $a\le i\le b$. Moreover, every dot in column $i$ of the diagram of $B$
 	is a part of such submodule, because the diagram of $B$
 	has no dots in column $0$, cf. \eqref{eq:2}. Thus, the dimension of the space $\Ext(\Irred{0,i-1},B)$ is equal to the number of dots in column $i$ of the diagram of $B$
 	$$\dim\Ext(\Irred{0,i-1},B)=d_i-(d_0-e_1-e_2-\ldots-e_i).$$
 	Therefore we have
 	$$
 	\dim\Ext(\Vrep{\rv},\Vrep{\rv})= \sum_{i=1}^n e_i\cdot \left(d_i-d_0+\sum^{i}_{j=1}e_j\right)=\QIPF(\rv).
 	$$
 \end{proof}
 
\subsection{The general case} Fix a dimension vector $\dv=(d_0,\ldots, d_n)$ and an integer $k\in[0,n]$ such that $d_k=\min(d_0,\ldots, d_n).$ As before, let us first make some general observations. \\
\\
Let $\rv$ be a \rvector. Split the lace diagram $\Dset{\rv}$ into four parts depending on their coordinate $y$ and consider the corresponding decomposition of $\Vrep{\rv}$
\begin{align*}
	B_1&=\Vrep{\rv}_{[d_k,\infty)}\,,& A_1&=\Vrep{\rv}_{[\sum^{n}_{i=k+1}e_i,d_k)}   \,,&  A_2&=\Vrep{\rv}_{[0,\sum^{n}_{i=k+1}e_i)}\,,&  B_2&=\Vrep{\rv}_{(-\infty,0)}\,,
\end{align*}
$$\Vrep{\rv}=B_1 \oplus A_1\oplus A_2 \oplus B_2.$$
%Note that $d_k-\sum^{k-1}_{i=0}e_i=\sum^{n}_{i=k+1}e_i$.
The summands $A_1$ and $A_2$ are easy to control, while
$B_1$ and $B_2$
are complicated, i.e.
\begin{align} \label{eq:3}
	A_1&=\bigoplus_{i=0}^{k-1} e_i \Irred{i+1,n} \,,& A_2&=\bigoplus_{i=k+1}^n e_i \Irred{0,i-1} \,,\\ 
	\nonumber B_1&=\bigoplus_{i,j>k} c_{i,j} \Irred{i,j}\,,&
	B_2&=\bigoplus_{i,j<k} c_{i,j} \Irred{i,j}\,,
\end{align} 
for some nonnegative integers $c_{i,j}$.
 \begin{example} \rm
	The schematic diagram below illustrates the lace diagram $\Dset{\rv}$. It is divided into four parts. The red parts correspond to the modules $B_1$ and $B_2$, while the blue ones correspond to $A_1$ and $A_2$. Explicitly: $B_1$ - the top red part, $B_2$ - the bottom red part, $A_1$ - the upper blue part, $A_2$ - the lower blue part. The vertical segment marks the position $k$, at the shortest vertical line.

    \[
\begin{tikzpicture}

\filldraw[color=blue!20, fill=blue!20] (0*\xx,0*\yy) -- (0*\xx,-5*\yy) -- (20*\xx,-5*\yy) -- (20*\xx,-3*\yy)
-- (24*\xx,-3*\yy) -- (24*\xx,-2*\yy) -- (27*\xx,-2*\yy)-- (27*\xx,0*\yy);
\filldraw[color=blue!20, fill=blue!20] (2*\xx,1*\yy)-- (30*\xx,1*\yy) -- (30*\xx,7*\yy) -- (5*\xx,7*\yy) -- (5*\xx,5*\yy) -- (3*\xx,5*\yy) -- (3*\xx,2*\yy)-- (2*\xx,2*\yy);

\draw[ultra thick,blue]  (14*\xx,7*\yy) --  (14*\xx,-5*\yy);

  \draw[thick] (2*\xx,1*\yy) -- (30*\xx,1*\yy); 
  \draw[thick] (2*\xx,2*\yy) -- (30*\xx,2*\yy); 
  \draw[thick] (3*\xx,3*\yy) -- (30*\xx,3*\yy); 
  \draw[thick] (3*\xx,4*\yy) -- (30*\xx,4*\yy); 
  \draw[thick] (3*\xx,5*\yy) -- (30*\xx,5*\yy); 
  \draw[thick] (5*\xx,6*\yy) -- (30*\xx,6*\yy); 
  \draw[thick] (5*\xx,7*\yy) -- (30*\xx,7*\yy); 

  \draw[thick,red] (15*\xx,8*\yy) -- (30*\xx,8*\yy);  
  \draw[thick,red] (15*\xx,9*\yy) -- (30*\xx,9*\yy); 
  \draw[thick,red] (16*\xx,10*\yy) -- (20*\xx,10*\yy); \draw[thick,red] (22*\xx,10*\yy) -- (30*\xx,10*\yy); 
  \draw[thick,red] (16*\xx,11*\yy) -- (19*\xx,11*\yy); \draw[thick,red] (23*\xx,11*\yy) -- (30*\xx,11*\yy); 
  \draw[thick,red] (17*\xx,12*\yy) -- (19*\xx,12*\yy); \draw[thick,red] (23*\xx,12*\yy) -- (30*\xx,12*\yy);  
  \draw[thick,red] (17*\xx,13*\yy) -- (18*\xx,13*\yy); \draw[thick,red] (23*\xx,13*\yy) -- (30*\xx,13*\yy);    
   \draw[thick,red] (24*\xx,14*\yy) -- (30*\xx,14*\yy);  
  \draw[thick,red] (24*\xx,15*\yy) -- (26*\xx,15*\yy); \draw[thick,red] (27*\xx,15*\yy) -- (30*\xx,15*\yy);  
  \draw[thick,red] (24*\xx,16*\yy) -- (25*\xx,16*\yy);

  \draw[thick] (0*\xx, 0*\yy) -- (27*\xx, 0*\yy); 
  \draw[thick] (0*\xx,-1*\yy) -- (27*\xx,-1*\yy); 
  \draw[thick] (0*\xx,-2*\yy) -- (27*\xx,-2*\yy); 
  \draw[thick] (0*\xx,-3*\yy) -- (24*\xx,-3*\yy); 
  \draw[thick] (0*\xx,-4*\yy) -- (20*\xx,-4*\yy); 
  \draw[thick] (0*\xx,-5*\yy) -- (20*\xx,-5*\yy); 

  \draw[thick,red] (0*\xx,-6*\yy) -- (13*\xx,-6*\yy);
  \draw[thick,red] (0*\xx,-7*\yy) -- (13*\xx,-7*\yy);
  \draw[thick,red] (0*\xx,-8*\yy) -- (13*\xx,-8*\yy);
  \draw[thick,red] (0*\xx,-9*\yy) -- (7*\xx,-9*\yy); \draw[thick,red] (9*\xx,-9*\yy) -- (12*\xx,-9*\yy); 
  \draw[thick,red] (0*\xx,-10*\yy) -- (6*\xx,-10*\yy); \draw[thick,red] (9*\xx,-10*\yy) -- (10*\xx,-10*\yy); \draw[thick,red] (11*\xx,-10*\yy) -- (12*\xx,-10*\yy);
  \draw[thick,red] (0*\xx,-11*\yy) -- (5*\xx,-11*\yy); \draw[thick,red] (9*\xx,-11*\yy) -- (10*\xx,-11*\yy);   
  \draw[thick,red] (0*\xx,-12*\yy) -- (5*\xx,-12*\yy);   
  \draw[thick,red] (0*\xx,-13*\yy) -- (2*\xx,-13*\yy);  \draw[thick,red] (3*\xx,-13*\yy) -- (4*\xx,-13*\yy);
\end{tikzpicture}
\]
\end{example}

\begin{proof} [Proof of Proposition \ref{pro:res2}]
	Let $\rv=(e_0,\ldots, e_{k-1}, \star, e_{k+1} \ldots, e_n)$ be a \rvector \ and $i$ an integer. 
    
	Suppose that $i>k$. Then, the indecomposable module $\Irred{0,i-1}$ occurs only in the summand
	$A_2$.
	The coordinate $e_i$ is the multiplicity of the indecomposable module $\Irred{0,i-1}$ in $\Vrep{\rv}$. 
    
	Consider now the other case, $i<k$. Then, the indecomposable quiver module $\Irred{i+1,n}$ occurs only in the summand	$A_1$. The coordinate $e_i$ is the multiplicity of the indecomposable module $\Irred{i+1,n}$ in $\Vrep{\rv}$.
\end{proof}
\begin{example} \rm
Consider the cases $n=5$ and $k=1$ and $k=2$. The Kostant partitions of $\Vrep{\rv}$ are of the form
$$
\begin{bmatrix}
	* &e_2&e_3&e_4&e_5& 0 \\
	  & 0 & 0 & 0 & 0 & e_0 \\
	  &   & * & * & * & * \\
  	  &   &   & * & * & * \\
	  &   &   &   & * & * \\
	  &   &   &   &   & *
\end{bmatrix}\,,
\qquad
\begin{bmatrix}
	* & * & e_3 & e_4 & e_5 & 0 \\
	& * & 0 & 0 & 0 & e_0 \\
	&   & 0 & 0 & 0 & e_1 \\
	&   &   & * & * & * \\
	&   &   &   & * & * \\
	&   &   &   &   & *
\end{bmatrix}.
$$
\end{example}
\begin{proof} [Proof of Proposition \ref{pro:res1}]
	We proceed analogously as in the previous section.
	The quiver modules
	$B_1\oplus A_1$ and $B_2 \oplus A_2$
	satisfy the assumptions of Corollary \ref{cor:OpenOrbit}, thus
	$$\Ext(B_1\oplus A_1,B_1\oplus A_1)=0\,, \qquad \Ext(B_2\oplus A_2,B_2\oplus A_2)=0.$$
	Directly from \eqref{eqn:ExtIndecomposables} and the description of indecomposable components \eqref{eq:3} we obtain
	$$\Ext(B_2,B_1)=0, \qquad\Ext(B_1,B_2)=0.$$
	Corollary \ref{cor:Ext} implies
	$$\Ext(A_1,\Vrep{\rv})=0, \qquad\Ext(\Vrep{\rv},A_2)=0.$$
	It follows that
	$$\Ext(\Vrep{\rv},\Vrep{\rv})=\Ext(B_2,A_1)\oplus\Ext(A_2,A_1)\oplus\Ext(A_2,B_1).$$
	The middle part can be computed directly
	\begin{align} \label{eq:proof1}
		\dim\Ext(A_2,A_1)=\sum_{i=k+1}^{n}\sum_{j=0}^{k-1} e_ie_j\cdot \dim\Ext(\Irred{0,i-1},\Irred{j+1,n})=\sum_{i>k>j} e_ie_j.
	\end{align}
	Let us focus on the part
	$$
	\dim\Ext(A_2,B_1)= \sum_{i=k+1}^n e_i\cdot \dim\Ext(\Irred{0,i-1},B_1).
	$$
	Reasoning from the previous section shows that the dimension of $\Ext(\Irred{0,i-1},B_1)$ is equal to the number of dots in column $i$ of $B_1$.
	We obtain
	\begin{align} \label{eq:proof2}
	\dim\Ext(A_2,B_1)= \sum_{i=k+1}^n e_i\cdot (d_i-d_k+\sum^{i}_{j=k+1}e_j).
	\end{align}
	Analogous reasoning shows that the dimension of $\Ext(B_2,\Irred{i+1,n})$ is equal to the number of dots in column $i$ of $B_2$, thus
	\begin{align} \label{eq:proof3}
		\dim\Ext(B_2,A_1)= \sum_{i=0}^{k-1} e_i\cdot (d_i-d_k+\sum^{k-1}_{j=i}e_j).
	\end{align}
	Summing formulas \eqref{eq:proof1},\eqref{eq:proof2} and \eqref{eq:proof3} we obtain the desired result.
\end{proof}

\bibliographystyle{alpha}
\bibliography{sample}

\end{document}